\documentclass[11pt,reqno]{amsproc}
\usepackage[margin=1in]{geometry}
\usepackage{amsmath, amsthm, amssymb}
\usepackage{hyperref}
\usepackage[abbrev,lite,nobysame]{amsrefs}
\usepackage{times}
\usepackage[usenames,dvipsnames]{color}
\usepackage{bbm}
\usepackage{bm}
\usepackage{subcaption}
\usepackage{mathtools}
\usepackage{pdfpages}
\usepackage{comment}



\newcommand{\R}{\mathbb{R}}

\newcommand{\N}{\mathbb{N}}

\newcommand{\C}{\mathbb{C}}

\newcommand{\A}{\mathcal{A}}
\newcommand{\B}{\mathcal{B}}
\newcommand{\cC}{\mathcal{C}}

\newcommand{\dd}{\mathrm{d}}

\newcommand{\de}{\partial}

\newcommand{\ue}{u^\varepsilon}

\newcommand{\wh}[1]{\widehat{#1}}
\newcommand{\wt}[1]{\widetilde{#1}}


\providecommand{\R}{\mathbb{R}}
\providecommand{\C}{\mathbb{C}}
\providecommand{\N}{\mathbb{N}}

\providecommand{\eps}{\varepsilon}

\renewcommand{\leq}{\leqslant}
\renewcommand{\geq}{\geqslant}

\renewcommand{\Im}{\mbox{Im}}


\DeclareMathOperator{\sgn}{sgn}

\newcommand{\Le}{\mathcal L^\eps}

\DeclareMathOperator{\sech}{sech}



  \newtheorem{thm}{Theorem}[section]
  
  \newtheorem{Lemma}[thm]{Lemma}
  
  \theoremstyle{definition}
    
  \theoremstyle{remark}
  \newtheorem{remark}[thm]{Remark}

\usepackage{etoolbox}
\patchcmd{\subsubsection}{\itshape}{\itshape\bfseries}{}{} 

%

\title[]{Cost of controllability of the Burgers’ equation linearized at a steady shock in the vanishing viscosity limit}

\author[V.\ Laheurte]{Vincent Laheurte}
\address{DMATH, Université du Luxembourg}
\email{vincent.laheurte@uni.lu}

%
%

\begin{document}
\maketitle
\begin{abstract}
    We consider the one-dimensional Burgers' equation linearized at a stationary shock, and investigate its null-controllability cost with a control at the left endpoint. We give an upper and a lower bound on the control time required for this cost to remain bounded in the vanishing viscosity limit, and construct an admissible control with an explicit limit behavior. We also provide an extension of the analysis to the case where the control acts on both endpoints. The proof relies on complex analysis and adapts methods previously used to tackle the same issue with a constant transport term. 
\end{abstract}

\setcounter{tocdepth}{3}

\let\oldtocsection=\tocsection

\let\oldtocsubsection=\tocsubsection

\let\oldtocsubsubsection=\tocsubsubsection

\tableofcontents

\section{Introduction}
\subsection{Shock profiles for Burgers' equation}
When considering weak solutions for the one-dimensional inviscid Burgers' equation \begin{equation}\left\{\begin{aligned}
    \de_tu(t,x)+u(t,x)\de_xu(t,x)&=0,\quad& t\in\R^+, x\in\R,\\
    u(0,x)&=u_0(x),\quad x\in\R,
    \end{aligned}\right.
\end{equation}
shocks may arise in finite time regardless of the initial datum's regularity. Indeed, by the method of characteristics, if $u_0$ is at least $C^1$ and there exists some $x\in\R$ such that $u_0'(x)<0$, two distinct characteristics will meet in time \[T=-\frac1{\inf_{x\in\R}u_0'(x)},\]which causes a jump discontinuity to appear in the solution $u$ for times greater than $T$.\\
Such shocks are jumps from $u^-$ to $u^+$, with $u^->u^+$, and propagate, by the Rankine-Hugoniot condition, at a speed \[s=\frac{u^-+u^+}{2}.\]
Stationary shocks therefore exist and are jump discontinuities from a positive value to its opposite.\\
To study the shocks, we may restrain the equation to an interval $[-L,L]$, $L>0$ and impose suitable Dirichlet conditions on the endpoints: \begin{equation}\left\{\begin{aligned}\label{shocks}
    \de_tu+u\de_xu&=0,\\
    u(t,-L)&=1,\\
    u(t,L)&=-1,\\
    u(0,x)&=u_0(x).
    \end{aligned}\right.
\end{equation}
The meaning of this system as well as the treatment of the boundary values are delicate, and we will consider \textit{weak entropy solutions}, for which we refer to \cite{bardos1979first}. In this setting, this system admits infinitely many stationary solutions which, from the Rankine-Hugoniot condition, are all of the form (see \cites{folino2017metastability,mascia2013metastability}) \[U_\sigma(x)=\begin{cases}
    1&\text{if }x<\sigma,\\
    -1&\text{if }x>\sigma.
\end{cases}\]
Moreover, if the initial datum $u_0$ has bounded variations, then the corresponding solution converges in finite time to one of these stationary solutions, see \cite{mascia2013metastability}.\\
If the initial datum has low regularity, the system becomes ill-posed, as initial conditions with a jump discontinuity from a smaller to a greater value lead to infinitely many weak solutions. To avoid this issue and define the "right" unique solution, the approach first used by Hopf \cite{hopf} is to add a small viscosity term $\eps\de_x^2$, whose regularizing effect makes the system well-posed, and let $\eps$ tend to 0. We are therefore interested in weak solutions of the viscous system

\begin{equation}\left\{\begin{aligned}\label{13}
    \de_t \ue+\ue\de_x\ue&=\eps\de_x^2\ue,\\
    \ue(t,-L)&=1,\\
    \ue(t,L)&=-1.
    \end{aligned}\right.
\end{equation}
Contrarily to the inviscid system \eqref{shocks}, by solving a second order ODE, this one admits a unique stationary solution. This solution has the form of a fast transition layer from $1$ to $-1$, resembling the inviscid shock $U_0$. This stationary solution is therefore referred to as a stationary viscous shock, or stationary shock by an abuse of language. This viscous shock profile is given by \begin{equation}\label{Ue}U^\eps(x)=-\kappa^\eps\tanh\left(\frac{\kappa^\eps x}{2\eps}\right),\end{equation} where $\kappa^\eps>0$ is such that $U^\eps(\pm L)=\mp1$.\\ This constant exists and is unique, as the function $\kappa\mapsto \kappa\tanh\left(\frac{\kappa L}{\eps}\right)$ is increasing on $\R^+$ and goes to $+\infty$ as $\kappa\to+\infty$. By the intermediate value theorem, this unique solution $\kappa^\eps$ verifies \[1<\kappa^\eps<\frac{1}{\tanh\left(\frac L\eps\right)},\]and is therefore exponentially close to $1$ as $\eps$ goes to $0$.\newline\newline
The $L^2$-stability of such viscous shocks in the context of Burgers equation was first studied by Il’in and Oleinik \cite{il1960asymptotic} by a maximum principle, and later by Sattinger \cite{sattinger1976stability} via spectral analysis tools. We refer to \cites{kawashima1994stability, matsumura1994asymptotic, matsumura1997nonlinear} for $L^2$-stability results with other flux functions $f(u)$, convex or non-convex.\\
 We wish to study some controllability aspects of the viscous Burgers' equation linearized at the shock profile \eqref{Ue}.

\subsection{Vanishing viscosity controllability problem}

We study the left-side null-controllability of the viscous Burgers' equation linearized around a stationary shock. Namely, we set an arbitrary control time $T>0$, and consider the control system
\begin{equation}\label{cont}\left\{\begin{aligned}
    \de_t \ue+\de_x(U^\eps \ue)&=\eps\de_x^2\ue,\quad &x\in (-L,L), t\in(0,T),\\
    \ue(t,-L)&=h^\eps(t),\quad &t\in(0,T),\\
    \ue(t,L)&=0,\quad &t\in (0,T),\\
    \ue(0,x)&=u_0,\quad &x\in(-L,L),
    \end{aligned}\right.
\end{equation}
where the initial datum $u_0$ lies in $L^2(-L,L)$, and the shock profile $U^\eps$ is given by \[U^\eps(x)=-\tanh\left(\frac{x}{2\eps}\right),\] where we ignore the $\kappa^\eps$ from \eqref{Ue} as it does not affect the calculations in a meaningful way.\\

For $\sigma\in (-L,L)$, we will more generally study the uncentered control problem\begin{equation}\label{cont asym}\left\{\begin{aligned}
    \de_t \ue+\de_x(U_\sigma^\eps \ue)&=\eps\de_x^2\ue,\quad &x\in (-L,L), t\in(0,T),\\
    \ue(t,-L)&=h^\eps(t),\quad &t\in(0,T),\\
    \ue(t,L)&=0,\quad &t\in (0,T),\\
    \ue(0,x)&=u_0,\quad &x\in(-L,L),
    \end{aligned}\right.
\end{equation}
where the shock profile is \[U^\eps_\sigma(x):=U^\eps(x-\sigma).\]
Although the viscous shock profiles $U^\eps_\sigma$ aren't solutions of the nonlinear problem \eqref{13}, for $\sigma\ne0,$ due to the boundary conditions not being met, we study them as they provide a good viscous counterpart to the stationary shocks $U_\sigma$.\\

Given $\eps>0, T>0, \sigma\in(-L,L)$, the issue of null-controllability is, for any initial datum, to find a control $h^\eps\in L^2(0,T)$ such that the solution of \eqref{cont asym} satisfies $\ue(T)=0$. If this is possible, we may define the null-controllability cost: \[\cC(T,L,\sigma,\eps):=\sup_{u_0\in L^2, \|u_0\|_{L^2}=1} \inf \left\{\|h^\eps\|_{L^2(0,T)}: \ue(T)=0\right\}.\]

This type of problems has been deeply studied \cite{fattorini1974uniform} and the system \eqref{cont} can be proved to be null-controllable for any $\eps>0$ and $T>0$. In this paper, we will tackle the issue of uniform null-controllability in the vanishing viscosity limit. Namely, we want to find the minimal time $T_{\rm{unif}}$ such that, for any $T>T_{\rm{unif}}$, the controllability cost $\cC(T,L,\sigma,\eps)$ remains bounded as $\eps\to0$.

\subsection{Control of the limit system}\label{sec lim sys}

Formally taking $\eps=0$ in the system \eqref{cont asym}, the limit control problem we obtain can be written as \begin{equation}\label{lim}
    \left\{\begin{aligned}
    \de_t u-\de_x(\sgn(x-\sigma)u)&=0,\\
    u(t,-L)&=h(t),\\
    u(t,L)&=0,\\
    u(0,x)&=u_0(x).
    \end{aligned}\right.
\end{equation}
We are allowed to impose boundary conditions on both endpoints as the transport $-\sgn(x-\sigma)$ is entering on the boundary, namely $-\sgn(-L-\sigma)>0, -\sgn(L-\sigma)<0.$ Solutions are to be interpreted in the sense of transposition, that is for any $t\in [0,T], \phi\in C^1([0,t]\times[-L,L]):$ \[\int_{-L}^L u(t,x)\phi(t,x)\,\dd x-\int_{-L}^L u_0(x)\phi(0,x)=\int_0^t u(\de_t \phi-\sgn(x-\sigma)\de_x\phi) + \int_0^th(\tau)\phi(\tau,-L)\,\dd \tau.\]
By the method of characteristics, for any time $t>0$, the solution $u(t)$ of this system is given by
\begin{align}
    \forall x \in(-L,\sigma),&\quad u(t,x)=\begin{cases}u_0(x-t)& t<x+L\\ h(t-x-L)&t\ge x+L \end{cases},\\
    \forall x\in(\sigma,L),&\quad u(t,x)=\begin{cases}u_0(x+t)& t<L-x\\ 0&t\ge L-x \end{cases},
\end{align}
and the solution has a Dirac in $x=\sigma$ of mass \begin{equation}
    m(t)=\begin{cases}\int_{\sigma-t}^{\sigma+t} u_0(x)\,\dd s,& t<L-\sigma\\ \int_{\sigma-t}^L u_0(x)\dd x, &L-\sigma\le t\le L+\sigma\\\int_{-L}^L u_0(x)\,\dd x+\int_0^{t-L-\sigma} h(s)\,\dd s, &t\ge L+\sigma \end{cases},
\end{equation}
if $\sigma\ge0$, and 
\begin{equation}
    m(t)=\begin{cases}\int_{\sigma-t}^{\sigma+t} u_0(x)\,\dd s,& t<L+\sigma\\ \int_{-L}^{\sigma+t} u_0(x)\dd x+\int_0^{t-L-\sigma}h(s)\,\dd s, &L+\sigma\le t\le L-\sigma\\\int_{-L}^L u_0(x)\,\dd x+\int_0^{t-L-\sigma} h(s)\,\dd s, &t\ge L+\sigma \end{cases},
\end{equation}
if $\sigma\le0$.\\
From this expression, one easily obtains that the system \eqref{lim} is only null-controllable if $T>L+|\sigma|$, and $u(T)\equiv 0$ if and only if the two following constraints on $h$ are verified: \begin{align}
    h(t)=0,\quad t\ge T-L-\sigma,\\
    \int_0^{T-L-\sigma} h(t)\,\dd t+\int_{-L}^Lu_0(x)\,\dd x=0.
\end{align}
We then immediately conclude that the optimal control associated to an initial datum $u_0$ is given by \begin{equation}\label{contlim}h(t)=\begin{cases}
    -\frac1{T-L-\sigma}\int_{-L}^L u_0(x)\,\dd x,&\text{if } t<T-L-\sigma,\\
    0&\text{if }t\ge T-L.
\end{cases}\end{equation}

\begin{figure}[h]
\begin{tabular}{cc}
	\includegraphics[width=0.45\textwidth]{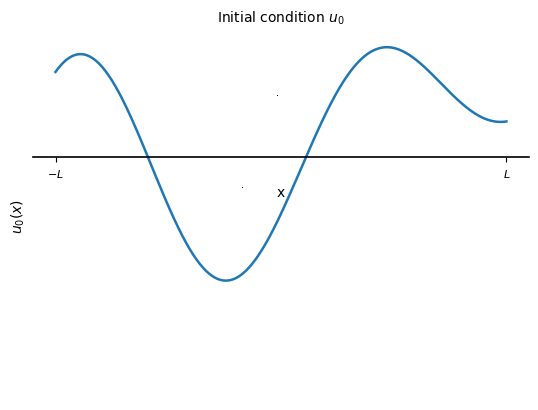} &
	\includegraphics[width=0.45\textwidth]{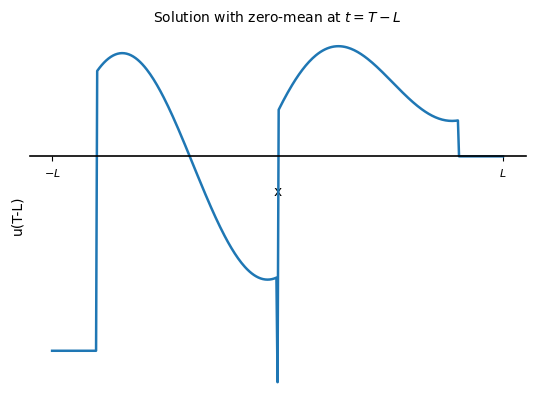} \\
\end{tabular}\caption{Evolution of the  solution with this choice of control, for $\sigma=0$.}
\end{figure}
Applying Cauchy-Schwarz' inequality, the null-controllability cost of the limit system is therefore given by \begin{equation}
    \cC(T,0)=\frac{\sqrt{2L}}{\sqrt{T-L-\sigma}}.
\end{equation}
It may then be expected that the viscous problem \eqref{cont} is uniformly null-controllable for times $T>L+|\sigma|$, and that its null-controllability cost converges to \(\lim_{\eps\to0}\cC(T,L,\sigma,\eps)=\cC(T,L,\sigma,0)=\frac{\sqrt{2L}}{\sqrt{T-L-\sigma}}\).

\subsection{Related problems}

This kind of problem was first addressed by Coron and Guerrero \cite{CG}, where they consider a transport-diffusion system on a finite interval $[0,L]$, with a constant transport term:
\begin{equation}\label{CG}
    \left\{\begin{aligned}
        \de_tu+M\de_x u&=\eps\de_x^2u,\\
        u(t,0)&=h(t),\\
        u(t,L)&=0.
    \end{aligned}\right.
\end{equation}
This system highlights different behaviors depending on the sign of the transport $M$. By using a dissipation argument and Carleman estimates, Coron and Guerrero \cite{CG} obtained a first upper-bound on uniform null-controllability time, which was later improved by Glass \cite{glass2010complex} through a method of moments, and by Lissy \cite{lissy2012link} with a reduction to the control of heat equation in short time. This result was later improved by Dardé-Ervedoza \cite{darde2019cost} with sharper estimates on the heat equation. In \cite{lissy2015explicit}, Lissy also provided a non-trivial lower bound on the uniform null-controllability time, with complex-analytic tools.\newline\newline
Similar results were then obtained for non-constant transport terms $M(t,x)$ in several space dimensions by Guerrero and Lebeau \cite{guerrero2007singular}, only proving the existence of a uniform time, without any estimate on it. Laurent and Léautaud \cites{laurent2016uniform, laurent2023uniform} later provided an upper bound on the uniform control time in similar contexts.\\
In the nonlinear setting, some partial results were given by Glass and Guerrero \cite{GG} for the Burgers' equation, and extended by Léautaud \cite{leautaudnonlin} to other conservation laws, by first bringing the solution to the neighborhood of a traveling wave.

\subsection{Main results}
 
We state below the main results, giving the existence and constraints on the control time required for the null-controllability cost to remain bounded in the vanishing viscosity limit.

\begin{thm}\label{thm1}
    For every $\sigma\in(-L,L),$, there exists a minimal time $T_{\rm{unif}} < +\infty$ such that the system \eqref{cont asym} is uniformly null-controllable for any time $T>T_{\rm{unif}}$. If $\sigma\ge 0$, this minimal time verifies \begin{equation}
        \label{tunif} T_{\rm{unif}}\in\left[(4\sqrt2-2), 4\sqrt3\right]L.
    \end{equation}
    Otherwise, if $\sigma<0$, it instead verifies \begin{equation}
        \label{tunif2} (4\sqrt2-2)L+8|\sigma| \le T_{\rm{unif}}\le 4(2|\sigma|+\sqrt{4\sigma^2+3L^2}).
    \end{equation}
\end{thm}

 \begin{remark}
     The upper bound on the uniform null-controllability time for $\sigma\le0$ matches the one obtained in \cite{lissy2012link} with $M=1$, as we work on an interval of length $2L$. The lower bound is however slightly worse than the one from \cite{lissy2015explicit} due to the behavior of the eigenfunctions of our operator at the left endpoint. We insist on the fact that, similarly to the Coron-Guerrero case with negative velocity, there is a time regime where the limit system is null-controllable, but the cost of the viscous system blows up as $\eps$ goes to $0$. 
 \end{remark}
 
We also provide a description of some admissible control function $h^\eps$ for times large enough.

\begin{thm}\label{thm2}
    Let \begin{equation}\label{def t*}T^*:=\begin{cases}
        4\sqrt3L,&\sigma\ge 0,\\
        4(2|\sigma|+\sqrt{4\sigma^2+3L^2}),& \sigma<0.
    \end{cases}\end{equation} Let $T>T^*$. Then, there exists $C>0$ such that, for any $u_0\in L^2(-L,L)$ and any $\eps>0$, there exists a control function $h_{u_0}^\eps\in L^2(0,T)$ such that the weak solution $\ue$ of \eqref{cont asym} satisfies $\ue(T)=0$ and \begin{equation} \label{major}
        \|h_{u_0}^\eps\|_{L^2(0,T)}\le \left(\frac{4\sqrt{L}}{\sqrt{T-T^*}}+Ce^{-\frac C\eps}\right)\|u_0\|_{L^2(-L,L)}.
    \end{equation}
    As $\eps$ goes to $0$, this control function converges in $L^2(0,T)$ towards the limit control \begin{equation}\label{7}
        h_{u_0}^0(t)=\begin{cases}
            -\frac2{T-T^*}\int_{-L}^Lu_0(x)\,\dd x,& t\le\frac{T-T^*}{2},\\
            0& t>\frac{T-T^*}2.
        \end{cases}
    \end{equation}
    More precisely, there exists a constant $c>0$ independent of the paramteters such that, for $\eps$ small enough, \begin{equation}\label{111}
        \|h_{u_0}^\eps-h_{u_0}^0\|_{L^2(0,T)}\le c\frac{\eps\|u_0\|_{L^2(-L,L)}}{\sqrt{T-T^*}}.
    \end{equation}
\end{thm}

The proof of 
$T_{\rm{unif}}\leq 4\sqrt3 L$ and of 
\eqref{major}
is given in 
Section 
 \ref{sec-upper}. It uses two intermediate results which are proved, respectively in Section 
\ref{sec-lem1} and in Section 
\ref{sec-lem2}. 
The proof of 
$T_{\rm{unif}}\geq (4\sqrt2-2)L$ is given in 
Section 
 \ref{sec-lissy}.
 These results rely on the spectral analysis of the operator at stake in the system \eqref{cont}  which is performed in Section 
\ref{sec-spec}.
The proof of Theorem \ref{thm2} is done within the proof of the upper bound, as we give a constructive proof of the control cost. More precisely, the estimate \eqref{major} is proven in Section \ref{48}, and the limit behavior \eqref{7} is given in \eqref{71} and \eqref{72}.\\
An extension of this result to the case with two boundary controls is given in Section \ref{sec ts}.

\begin{remark}
    The admissible control built in Theorem \ref{thm2} has the same structure as the optimal control of the limit system \eqref{contlim}. It morally cancels out the mean of the solution $\ue$ in an arbitrarily small time and then exploits the strong dissipation of the system.
\end{remark}

It is currently unclear whether these results may improve or prove further controllability results in the non-linear case, extending the work of \cite{GG}, or if similar results can be obtained for other conservation laws or more generally for systems with other singularities.

\section{Spectral analysis of the operator}
\label{sec-spec}
We are interested in the eigenvalues and eigenfunctions of the linearized Burgers operator \[\Le_\sigma u(x) :=\de_x\left(U_\sigma^\eps(x)u(x)\right) - \eps\de_x^2u(x),\]acting on $H_0^1\cap H^2(-L,L)$.\\
In the centered case $\sigma=0$, this operator, among other viscous conservation laws, has been studied in \cite{kreiss1986convergence}, with a deeper look at the metastability phenomenon in \cites{folino2017metastability, mascia2013metastability}. In these papers, the authors highlighted that the eigenvalues $(\lambda_{0,k}^\eps)_{k\ge0}$ of the operator $\Le_0$ are simple, real, positive, and are distributed as follows: \[\lambda_{0,0}^\eps=\mathcal O\left(\exp\left(-\frac1{\eps}\right)\right),\quad \lambda_{0,k}^\eps>\frac{1}{4\eps}, k\ge1.\]

The first notable difference with the operator of the Coron-Guerrero system \eqref{CG} is the presence of an exponentially small eigenvalue. This eigenvalue is a consequence of the translation invariance of the operator. Indeed, the derivative of the shock $\de_x(U^\eps_\sigma)$, corresponding to a shift of the location of the shock, lies within the kernel of the operator $\Le_\sigma$, with exponentially small boundary values. We refer to \cite{beck2009using} for more details regarding the metastability.\newline\newline
The results from \cites{CG,glass2010complex} make use of the strong dissipation of the system. If we wish to use a similar approach, we need to treat the term corresponding to the first eigenvalue separately.\\
Moreover, in order to apply the method of moments, we need sharper estimates regarding the distribution of the eigenvalues, as well as some information about the eigenfunctions. We obtain the following result: 
\begin{Lemma}\label{prop1}
    For $\sigma\in(-L,L)$ and $\eps>0$ small enough, the eigenvalues $(\lambda_{\sigma,k}^\eps)_{k\ge 0}$ of the operator $\Le_\sigma$ acting on $H_0^1\cap H^2(-L,L)$ verify the following estimates: \begin{equation}\label{dist0}\exists C>0 : 0<\lambda_{\sigma,0}^\eps<\frac{C}{\eps}
e^{-\frac{L}{2\eps}},\end{equation}
    \begin{equation}\label{dist}\frac1{4\eps}+k^2\frac{\pi^2\eps}{4L^2}<\lambda_{\sigma,k}^\eps<\frac1{4\eps}+(k+1)^2\frac{\pi^2\eps}{4L^2}, \quad k\ge 1\end{equation}\begin{equation}\label{gap}|\lambda_{\sigma,k}^\eps-\lambda_{\sigma,j}^\eps|\ge |k^2-j^2|\frac{\eps\pi^2}{4L^2}, \quad j,k\ge 1.\end{equation}
    Moreover, the associated eigenfunctions 
    $(\psi_{\sigma,k}^\eps)_{k\ge 0}$ of $(\Le_\sigma)^*$ are such that \begin{align}
        \frac{\|\psi_{\sigma,0}^\eps\|_{L^2(-L,L)}}{|\eps(\psi_{\sigma,0}^\eps)'(-L)|}&\le 2\sqrt{2L},\label{0}\\
        \frac{\|\psi_{\sigma,k}^\eps\|_{L^2(-L,L)}}{|\eps(\psi_{\sigma,k}^\eps)'(-L)|}&\le \begin{cases}\frac{4L}{k\pi\sqrt{\eps}},&\sigma\ge0\\\frac{4L}{k\pi\sqrt\eps}e^{\frac{2|\sigma|}\eps},&\sigma<0\end{cases},\quad k\ge1.\label{k}
    \end{align}
    More precisely, the first eigenfunction satisfies, for some constant $c>0$ independent of the parameters, and for $\eps$ small enough, \begin{equation}\label{lim psi0}
        \left\|\frac{\psi_{\sigma,0}^\eps}{\eps(\psi_{\sigma,0}^\eps)'(-L)}-1\right\|_{L^2(-L,L)}\le c\eps.
    \end{equation}
\end{Lemma}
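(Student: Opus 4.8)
The plan is to reduce the eigenvalue problem $\Le\psi = \lambda\psi$ to an explicitly solvable ODE by a gauge transformation that removes the first-order term. Writing $V(x)=\tanh(x/2\eps)$, so that $\eps V' = \tfrac12(1-V^2)$, the substitution $u = e^{-\varphi}w$ with $\varphi' = V/(2\eps)$ (i.e. $e^{\varphi}=\cosh(x/2\eps)$, up to a constant) should turn $\Le$ into a Schrödinger-type operator $-\eps\, w'' + q^\eps(x)\, w$ on $(-L,L)$ with Dirichlet boundary conditions, where the potential $q^\eps$ is a constant $1/4\eps$ plus an exponentially localized bump near $x=0$ coming from $V'$ and $V^2$. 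Concretely one expects $q^\eps(x) = \tfrac{1}{4\eps}\bigl(1 - 2\sech^2(x/2\eps)\bigr)$ or something very close to it. Since the operator is a small perturbation (in the resolvent sense, away from the bump) of $-\eps\partial_x^2 + 1/4\eps$ with Dirichlet conditions on an interval of length $2L$, whose eigenvalues are exactly $1/4\eps + n^2\pi^2\eps/4L^2$, estimates \eqref{dist} and \eqref{gap} will follow from a min–max / Sturm oscillation comparison: the bump is negative and supported in an $O(\eps)$-neighborhood of the origin, so it can only push each eigenvalue down by at most one "slot," which gives the two-sided bound with the shift by $(k+1)^2$ versus $k^2$. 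The gap estimate \eqref{gap} is then immediate from \eqref{dist} by subtracting the bounds, using that consecutive $k^2$ are spread out. For the near-zero eigenvalue $\lambda_0^\eps$, I would exhibit an explicit exponentially good quasimode: the function $\tanh(x/2\eps)$ itself (or the derivative of the shock profile), suitably cut off to satisfy the boundary conditions, nearly lies in the kernel, with residual of size $e^{-c/\eps}$; combined with the spectral gap to $\lambda_1^\eps \gtrsim 1/4\eps$ this pins $\lambda_0^\eps$ in $(0, Ce^{-L/2\eps})$ — positivity itself follows from the maximum principle / the known sign structure cited from \cite{folino2017metastability, mascia2013metastability}.

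For the eigenfunction ratios \eqref{0}–\eqref{k}, the key identity is a Rellich–Pohozaev / energy argument giving a lower bound on the boundary derivative in terms of the $L^2$ norm and the eigenvalue. Working with the adjoint $(\Le)^*$ on $H_0^1\cap H^2$ and its eigenfunction $\psi_k^\eps$, I would multiply $(\Le)^*\psi_k = \lambda_k\psi_k$ by a well-chosen multiplier — either $\psi_k$ itself (to get $\eps\|(\psi_k)'\|_{L^2}^2 + (\text{drift term}) = \lambda_k\|\psi_k\|_{L^2}^2$) or the Morawetz-type multiplier $(x+L)(\psi_k)'$ — and integrate by parts. Because $(\Le)^*$ has the first-order term with the opposite sign, the drift contributes a one-signed term near the boundary. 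The upshot should be an inequality of the form $\eps^2|(\psi_k^\eps)'(-L)|^2 \geq c\,\lambda_k^\eps\,\eps\,\|\psi_k^\eps\|_{L^2}^2$ plus lower-order corrections; inserting $\lambda_k^\eps > 1/4\eps$ for $k\geq 1$ turns this into \eqref{k}, while for $k=0$ one uses the explicit quasimode description instead. In the Schrödinger gauge the same computation is cleaner: the gauged eigenfunctions $w_k$ are close to $\sin(n\pi(x+L)/2L)$ on most of the interval, whose normal derivative at $-L$ is computable, and the gauge factor $\cosh(x/2\eps)$ contributes only a bounded multiplicative constant at the endpoint.

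Finally, for the refined limit \eqref{lim psi0}, the plan is to show that the normalized first eigenfunction $\psi_0^\eps/(\eps(\psi_0^\eps)'(-L))$ converges in $L^2$ to the constant function $1$. This is consistent with the limit system \eqref{lim}: as $\eps\to0$ the first mode should become the constant, reflecting the fact that the conserved quantity of the limiting problem is the total mass. I would prove it by combining the quasimode bound (which forces $\psi_0^\eps$ to be $e^{-c/\eps}$-close to a solution of the limiting degenerate ODE, which is just a constant on each side and matches across $0$) with the normalization at $x=-L$, then checking the constant on the right half is also forced to agree. The main obstacle I anticipate is \eqref{k} with the sharp $\sqrt\eps$ scaling and correct constant $4L/k\pi$: a crude energy identity gives the right power of $\eps$ but not necessarily the clean constant, so one likely needs the explicit sinusoidal structure of the gauged eigenfunctions together with careful control of how the $O(\eps)$-supported potential bump perturbs both the eigenfunction in $L^2$ and its boundary derivative — that is where the bulk of the technical work will sit, and where keeping track of the exponentially small error terms against the polynomially small main terms requires care.
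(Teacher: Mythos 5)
Your first step --- the gauge transformation via $\cosh(x/2\eps)$ reducing $\Le$ to the Schr\"odinger-type operator $-\eps\de_x^2 + \frac{1}{4\eps}\bigl(1 - 2\sech^2(x/2\eps)\bigr)$ on $(-L,L)$ with Dirichlet conditions --- matches the paper exactly. But from there the proposal diverges and, I think, would not succeed.

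The genuine gap is that you try to treat the $\sech^2$ well as a localized perturbation of the constant-potential operator $-\eps\de_x^2 + \frac{1}{4\eps}$ and invoke min--max or Sturm oscillation comparison to deduce the "one-slot" bound \eqref{dist} and the gap \eqref{gap}. The paper explicitly flags that this route does not directly work: the well has magnitude $\sim 1/\eps$ at the origin and $L^1$-norm $O(1)$, while the spectral spacing between consecutive $\lambda_k^\eps$ for small $k$ is only $O(\eps)$; the perturbation is therefore huge compared with the gaps you are trying to resolve, and a soft comparison argument cannot place each eigenvalue inside its specific window $[\frac{1}{4\eps}+k^2\frac{\pi^2\eps}{4L^2}, \frac{1}{4\eps}+(k+1)^2\frac{\pi^2\eps}{4L^2}]$, nor give the non-consecutive gap estimate \eqref{gap}. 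The missing idea that makes this explicit is the Darboux/SUSY factorization $P^\eps = (a^\eps)^* a^\eps$ with $a^\eps = \eps\de_x + \tfrac12\tanh(x/2\eps)$. Passing to the partner $a^\eps (a^\eps)^* = -\eps^2\de_x^2 + \tfrac14$ converts the problem, at the cost of replacing Dirichlet by a Robin condition $(a^\eps)^* f(\pm L) = 0$, into a \emph{constant-coefficient} problem whose eigenfunctions are literally $\cosh/\sinh$ (for the small eigenvalue) or $\cos/\sin$ (for $k\geq 1$). The dispersion relation $\sqrt{\eps\lambda^\eps - \tfrac14} = \frac{\eps}{L}\bigl(k\tfrac{\pi}{2} + \theta^\eps\bigr)$ with $\theta^\eps \in (0,\tfrac{\pi}{2})$ then delivers \eqref{dist} and \eqref{gap} immediately, exactly, and without perturbation theory; this is precisely what makes the "one-slot" intuition a theorem rather than a heuristic.

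The same intertwining $\varphi_k^\eps = (a^\eps)^* f_k^\eps$ hands you closed-form eigenfunctions, so the ratios \eqref{0}--\eqref{k} and the limit \eqref{lim psi0} are obtained by direct computation rather than by the Rellich--Pohozaev / energy identity you sketch. You yourself anticipate that the energy method will give the right power of $\eps$ but not the clean constant $4L/(k\pi)$; the explicit formulas sidestep that difficulty entirely. Similarly, for $\lambda_0^\eps$ the paper does not need a quasimode-plus-spectral-gap argument: the transcendental condition \eqref{lambda0} coming from the Robin boundary condition directly yields $0<\lambda_0^\eps < C\eps^{-1}e^{-L/2\eps}$ by a concavity argument. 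So the quasimode route is not wrong in spirit, but it is unnecessary once you have the factorization, and without the factorization the crucial estimates \eqref{dist}, \eqref{gap} remain out of reach.
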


Above, the operator $(\Le_\sigma)^*$  denotes the adjoint operator of $\Le_\sigma$ with homogeneous Dirichlet condition, namely \[(\mathcal L_\sigma^\eps)^*=-\eps\de_x^2-U_\sigma^\eps \de_x.\] An eigenfunction of $(\Le_\sigma)^*$ is a function $\psi_\sigma^\eps$ that satisfies, for some $\lambda_\sigma^\eps\in\mathbb R$,
the eigenvalue problem \begin{equation}\label{eigenL}
        \left\{\begin{aligned}
            (\Le_\sigma)^*\psi_\sigma^\eps&=\lambda_\sigma^\eps\psi_\sigma^\eps,\\
            \psi_\sigma^\eps(\pm L)&=0,
        \end{aligned}\right.
    \end{equation} 

   \begin{remark}
      The distribution of the eigenvalues \eqref{dist} and the information about the gap \eqref{gap} are important in the computations later, and are not directly obtainable to our knowledge by standard methods such as perturbation tools, Sturm-Liouville theory or min-max formulation. We also insist on the fact that the eigenvalues of our operator $(\Le_\sigma)^*$ strongly resemble the ones of the Coron-Guerrero operator, for $M=\pm1$ and an interval of length $2L$.
  \end{remark} 

\begin{proof} In a first time, we reduce the eigenvalue problem \eqref{eigenL} to an equivalent eigenvalue problem for a Schrödinger operator with a constant potential, by first reducing it to a self-adjoint operator.
Then, we directly derive the solutions of the reduced eigenvalue problem, and deduce explicit expressions for the eigenvalues and eigenfunctions of the operator $(\Le_\sigma)^*$ acting on $H^1_0\cap H^2(-L,L).$\\

\textbf{Step 1. Reduction to a self-adjoint operator.}

\begin{Lemma}\label{lem conj 1}
    Let\[P_\sigma^\eps:=-\eps^2\de_x^2-\frac14+\frac12\tanh^2\left(\frac{x-\sigma}{2\eps}\right)=-\eps^2\de_x^2+\left(\frac{(U^\eps_\sigma)^2}{4}+\eps \frac{\de_x U^\eps_\sigma}2\right).\]
    Then the eigenvalue problem \eqref{eigenL} is equivalent to the problem \begin{equation}\label{eigenP}
        \left\{\begin{aligned}
            \frac{P_\sigma^\eps}\eps \varphi_\sigma^\eps&=\lambda^\eps\varphi_\sigma^\eps,\\
            \varphi_\sigma^\eps(\pm L)&=0.
        \end{aligned}\right.
    \end{equation}
    More precisely, a pair $(\lambda_\sigma^\eps, \psi_\sigma^\eps)$ is solution of \eqref{eigenL} if and only if the pair $(\lambda_\sigma^\eps, \sech\left(\frac x{2\eps}\right) \psi_\sigma^\eps)$ solves \eqref{eigenP}.
\end{Lemma}
\begin{proof}

 Following \cite{laurent2016uniform,folino2017metastability} we compute, for $u\in L^2(-L,L)$:
 \begin{align*}
     \sech\left(\frac{x-\sigma}{2\eps}\right)(-\eps\de_x^2)\cosh\left(\frac{x-\sigma}{2\eps}\right)u&=-\eps\de_x^2u - \tanh\left(\frac{x-\sigma}{2\eps}\right)\de_x u -\frac1{4\eps}u,\\
     \sech\left(\frac{x-\sigma}{2\eps}\right)\left(\tanh\left(\frac{x-\sigma}{2\eps}\right)\de_x\right)\cosh\left(\frac{x-\sigma}{2\eps}\right)u&=\tanh\left(\frac{x-\sigma}{2\eps}\right)\de_xu+\frac1{2\eps}\tanh^2\left(\frac{x-\sigma}{2\eps}\right)u.
\end{align*}
Summing those two equalities give the conjugation relation:
    \begin{equation}\label{conjug}\frac{P_\sigma^\eps}\eps=\sech\left(\frac{x-\sigma}{2\eps}\right)\left(\mathcal L_\sigma^\eps\right)^*\cosh\left(\frac{x-\sigma}{2\eps}\right).\end{equation}  
    The equivalence of the eigenvalues problems \eqref{eigenL} and \eqref{eigenP} directly follows from this conjugation.\end{proof}

     \textbf{Step 2. Reduction to a constant potential.}
    \begin{Lemma}\label{lem conj 2}
        Let \[a_\sigma^\eps:=\eps\de_x+\frac12\tanh\left(\frac{x-\sigma}{2\eps}\right)=\eps\de_x-\frac12U^\eps_\sigma.\]
        The eigenvalue problem \eqref{eigenP} reduces to the constant-potential problem \begin{equation}\label{aa*}
        \left\{\begin{aligned}
            \left(-\eps^2\de_x^2+\frac14\right)f_\sigma^\eps&=\eps\lambda_\sigma^\eps f_\sigma^\eps,\\
            (a_\sigma^\eps)^*f_\sigma^\eps(\pm L)&=0.
        \end{aligned}\right.
    \end{equation}
    More precisely, if an eigenpair $(\lambda_\sigma^\eps, \varphi_\sigma^\eps)$ solves the problem \eqref{eigenP} with $a^\eps_\sigma \varphi_\sigma^\eps\ne 0$, then $(\lambda_\sigma^\eps, a^\eps_\sigma \varphi_\sigma^\eps)$ is an eigenpair of the constant-potential problem \eqref{aa*}.\\
    Conversely, if $(\lambda_\sigma^\eps, f^\eps_\sigma)$ is an eigenpair of \eqref{aa*} with $(a_\sigma^\eps)^* f_\sigma^\eps\ne0$, then $(\lambda_\sigma^\eps, (a_\sigma^\eps)^* f_\sigma^\eps)$ is an eigenpair of the problem \eqref{eigenP}.
    \end{Lemma}
    \begin{proof}
    As seen in \cite[Proposition 3.1]{carbou2010stability}, the operator $P_\sigma^\eps$ can be factored under the form \[P_\sigma^\eps=(a_\sigma^\eps)^* a_\sigma^\eps.\]
    Moreover, switching $a_\sigma^\eps$ and its conjugate gives \[a_\sigma^\eps (a_\sigma^\eps)^*=-\eps^2\de_x^2+\frac{(U^\eps_\sigma)^2}{4}-\eps\frac{\de_x U^\eps_\sigma}{2}.\]
    Note that \[\de_x\left(\frac{(U^\eps_\sigma)^2}{4}+\eps\frac{\de_x U^\eps_\sigma}{2}\right)=\frac12\left( U_\sigma^\eps\de_x U_\sigma^\eps - \eps\de_x^2 U_\sigma^\eps\right)=0,\] as $U^\eps_\sigma$ is a stationary solution of the Burgers' equation. This transformation therefore will not lead to a constant potential for other conservation laws.\\
    We then have the much simpler operator \[a_\sigma^\eps(a_\sigma^\eps)^*=-\eps^2\de_x^2+\frac14.\]
    Now let $(\lambda_\sigma^\eps,\varphi_\sigma^\eps)$ a pair solving the problem \eqref{eigenP}, and take $f_\sigma^\eps:= a_\sigma^\eps \varphi_\sigma^\eps.$\\
    It holds \[(a_\sigma^\eps)^*a_\sigma^\eps \varphi_\sigma^\eps = \eps\lambda_\sigma^\eps\varphi_\sigma^\eps.\]
    Applying the operator $a_\sigma^\eps$ to this equality then yields \[a_\sigma^\eps(a_\sigma^\eps)^* f_\sigma^\eps=\eps\lambda_\sigma^\eps f_\sigma^\eps.\]
    For the boundary conditions, we note that \[(a_\sigma^\eps)^*f_\sigma^\eps=P_\sigma^\eps\varphi_\sigma^\eps=\eps\lambda_\sigma^\eps\varphi_\sigma^\eps,\]
    so that it holds \[(a_\sigma^\eps)^*f_\sigma^\eps(\pm L)=0.\]
    The pair $(\lambda_\sigma^\eps, f_\sigma^\eps)$ therefore solves the eigenvalue problem \eqref{aa*}. The converse behaves the exact same way.\end{proof}
   
      \textbf{Step 3. An exponentially small eigenvalue.}

    We now resume the proof of Lemma \ref{prop1} by studying the simple eigenvalue problem \eqref{aa*}. In a first time, we focus on proving the estimates on the first eigenvalue \eqref{dist0}, as well as the properties of the associated eigenfunction \eqref{0}, \eqref{lim psi0}.\newline\newline
    If an eigenvalue satisfies $\lambda_\sigma^\eps<\frac1{4\eps}$, then there exists $A,B\in\R$ such that the associated eigenfunction $f_\sigma^\eps$ is of the form \begin{equation}\label{eigen 0 f}f_\sigma^\eps(x)=A\exp\left(\frac{\sqrt{\frac14-\eps\lambda_\sigma^\eps}}{\eps}x\right)+B\exp\left(-\frac{\sqrt{\frac14-\eps\lambda_\sigma^\eps}}{\eps}x\right)\end{equation}
    The boundary condition $(a_\sigma^\eps)^*f_\sigma^\eps(\pm L)=0$ then translates to \begin{align}\notag A\exp\left(-\frac{\sqrt{\frac14-\eps\lambda_\sigma^\eps}L}{\eps}\right)\left(-\sqrt{\frac14-\eps\lambda_\sigma^\eps}-\frac12\tanh\left(\frac{L+\sigma}{2\eps}\right)\right)\\+B\exp\left(\frac{\sqrt{\frac14-\eps\lambda_\sigma^\eps}L}{\eps}\right)\left(\sqrt{\frac14-\eps\lambda_\sigma^\eps}-\frac12\tanh\left(\frac{L+\sigma}{2\eps}\right)\right)=0,\\\notag\\A\exp\left(\frac{\sqrt{\frac14-\eps\lambda_\sigma^\eps}L}{\eps}\right)\left(-\sqrt{\frac14-\eps\lambda_\sigma^\eps}+\frac12\tanh\left(\frac{L-\sigma}{2\eps}\right)\right)\\+B\exp\left(\frac{\sqrt{\frac14-\eps\lambda_\sigma^\eps}L}{\eps}\right)\left(\sqrt{\frac14-\eps\lambda_\sigma^\eps}+\frac12\tanh\left(\frac{L-\sigma}{2\eps}\right)\right)=0.\end{align}
    Introducing \[\mu_\sigma^\eps:=\sqrt{\frac14-\eps\lambda_\sigma^\eps},\]
    this system admits a nontrivial solution $(A,B)$ if and only if\begin{align}\notag
        \left(\mu_\sigma^\eps-\frac12\tanh\left(\frac{L-\sigma}{2\eps}\right)\right)\left(\mu_\sigma^\eps-\frac12\tanh\left(\frac{L+\sigma}{2\eps}\right)\right)=\\\left(\mu_\sigma^\eps+\frac12\tanh\left(\frac{L-\sigma}{2\eps}\right)\right)\left(\mu_\sigma^\eps+\frac12\tanh\left(\frac{L+\sigma}{2\eps}\right)\right)\exp\left(-\frac{4\mu_\sigma^\eps L}\eps\right).\label{cond mu}
    \end{align}
    The values $\mu_\sigma^\eps=0$ and $\mu_\sigma^\eps=\frac12$ are trivial solutions, but we discard them as they lead to $f_\sigma^\eps=0$ or $f_\sigma^\eps\in\ker((a_\sigma^\eps)^*)$.\\
    On the interval $[0,\frac14)$, multiplying the above condition by $\exp(2\mu_\sigma^\eps L/\eps)$ gives, for $\eps$ small enough, a LHS which is increasing, and a RHS which is decreasing. Therefore there is no solution $\mu_\sigma^\eps$ of \eqref{cond mu} in $(0,\frac14).$\\
    On $(\frac14,\frac12]$, the function given by the difference LHS-RHS is convex, and thus admits at most two zeroes. By the intermediate value theorem, it has exactly two zeroes, which are each located between a root of \[\mu\mapsto \left(\mu-\frac12\tanh\left(\frac{L-\delta}{2\eps}\right)\right)\left(\mu-\frac12\tanh\left(\frac{L+\delta}{2\eps}\right)\right),\]and a root of \[\mu\mapsto \left(\mu-\frac12\tanh\left(\frac{L-\delta}{2\eps}\right)\right)\left(\mu-\frac12\tanh\left(\frac{L+\delta}{2\eps}\right)\right)-\exp\left(\frac{2L}\eps\right).\]
    To conclude, the condition \eqref{cond mu} admits exactly three solutions in $[0,\frac12]$: $0,\frac12$ and some $\mu_{\sigma,0}^\eps$ verifying \begin{equation}\label{est mu}\frac12-\mu_{\sigma,0}^\eps\sim \exp\left(-\frac{L-|\sigma|}{\eps}\right).\end{equation}
    The associated eigenvalue then verifies\begin{equation}
        \lambda_{\sigma,0}^\eps\sim\frac1\eps\cdot\exp\left(-\frac{L-|\delta|}\eps\right),
    \end{equation}
    and any nontrivial pair of coefficients $(A,B)$ satisfies \[\frac AB\sim -\exp\left(\frac{\sigma}\eps\right).\]
    From Lemmas \ref{lem conj 1}, \ref{lem conj 2}, the  corresponding eigenfunction for $(\Le)^*$ is $\psi_0^\eps(x)=\cosh\left(\frac{x}{2\eps}\right)\varphi_0^\eps(x),$ where $f_{\sigma,0}^\eps$ is given by \eqref{eigen 0 f}. Direct calculations then give \begin{align}
        \psi_{\sigma,0}^\eps(x)=A\left[(-\mu_{\sigma,0}^\eps+\frac12)\exp\left((\mu_{\sigma,0}^\eps+\frac12)\frac x\eps-\frac{\sigma}{2\eps}\right)+(-\mu_{\sigma,0}^\eps-\frac12)\exp\left((\mu_{\sigma,0}^\eps-\frac12)\frac x\eps+\frac{\sigma}{2\eps}\right)\right]+\notag\\B\left[(\mu_{\sigma,0}^\eps+\frac12)\exp\left((-\mu_{\sigma,0}^\eps+\frac12)\frac x\eps-\frac{\sigma}{2\eps}\right)+(\mu_{\sigma,0}^\eps-\frac12)\exp\left((-\mu_{\sigma,0}^\eps-\frac12)\frac x\eps+\frac{\sigma}{2\eps}\right)\right].\label{defphi0}
    \end{align}
    We set $A=1$ so that $B\sim -\exp(-\sigma/\eps)$. Using this as well as the asymptotic behavior \eqref{est mu} gives us the estimates, as $\eps$ goes to $0$:\begin{align}
        \eps(\psi_{\sigma,0}^\eps)'(-L)&\sim -\exp\left(\frac{|\sigma|}\eps-\frac{\sigma}{2\eps}\right)\label{up0},\\
        \|\psi_{\sigma,0}^\eps\|_{L^2(-L,L)}&\sim\sqrt{2L}\cdot\left(\exp\left(\frac\sigma{2\eps}\right)+\exp\left(-\frac{3\sigma}{2\eps}\right)\right)\sim\sqrt{2L}\exp\left(\frac{|\sigma|}\eps-\frac\sigma{2\eps}\right).\label{low0}
    \end{align}
    Combining the bounds \eqref{up0} and \eqref{low0} then gives the desired estimates \eqref{0}, for $\eps$ small enough.\\
    We may refine the analysis as $\eps$ goes to $0$, and obtain the limit behavior\begin{align}
        &\eps(\psi_{\sigma,0}^\eps)'(-L)=-\exp\left(\frac{|\sigma|}{\eps}-\frac{\sigma}{2\eps}\right)(1+O(e^{-\frac c\eps})),\\
        &\left\|-\exp\left(\frac{|\sigma|}\eps-\frac{\sigma}{2\eps}\right)\psi_{\sigma,0}^\eps-1\right\|_{L^2(-L,L)}\le 4\eps.
    \end{align}
    This finally yields \eqref{lim psi0}.
    
    \textbf{Step 4. The other eigenvalues.}
    We now conclude the proof of Lemma \ref{prop1}, by studying the rest of the spectrum and showing that it follows the distribution \eqref{dist}. We also study the associated eigenfunctions and prove they verify the boundary behavior \eqref{k}.\newline\newline
    Now, let us consider eigenvalues verifiying $\lambda_\sigma^\eps\ge\frac1{4\eps}$. Then any associated eigenfunction can be written as \[f_\sigma^\eps(x)=A\cos\left(\frac{\sqrt{\eps\lambda_\sigma^\eps-\frac14}}{\eps}(x-\sigma)\right)+B\sin\left(\frac{\sqrt{\eps\lambda_\sigma^\eps-\frac14}}{\eps}(x-\sigma)\right).\]
    The boundary conditions then translate to:
    \begin{align}
        \notag A\left(-\sqrt{\eps\lambda_\sigma^\eps-\frac14}\sin\left(\frac{\sqrt{\eps\lambda_\sigma^\eps-\frac14}}{\eps}(L+\sigma)\right)-\frac12\tanh\left(\frac{L+\sigma}{2\eps}\right)\cos\left(\frac{\sqrt{\eps\lambda_\sigma^\eps-\frac14}}{\eps}(L+\sigma)\right)\right)+\\
        B\left(-\sqrt{\eps\lambda_\sigma^\eps-\frac14}\cos\left(\frac{\sqrt{\eps\lambda_\sigma^\eps-\frac14}}{\eps}(L+\sigma)\right)+\frac12\tanh\left(\frac{L+\sigma}{2\eps}\right)\sin\left(\frac{\sqrt{\eps\lambda_\sigma^\eps-\frac14}}{\eps}(L+\sigma)\right)\right)=0,\label{left}\\\notag\\
        \notag A\left(\sqrt{\eps\lambda_\sigma^\eps-\frac14}\sin\left(\frac{\sqrt{\eps\lambda_\sigma^\eps-\frac14}}{\eps}(L-\sigma)\right)+\frac12\tanh\left(\frac{L-\sigma}{2\eps}\right)\cos\left(\frac{\sqrt{\eps\lambda_\sigma^\eps-\frac14}}{\eps}(L-\sigma)\right)\right)+\\
        B\left(-\sqrt{\eps\lambda_\sigma^\eps-\frac14}\cos\left(\frac{\sqrt{\eps\lambda_\sigma^\eps-\frac14}}{\eps}(L-\sigma)\right)+\frac12\tanh\left(\frac{L-\sigma}{2\eps}\right)\sin\left(\frac{\sqrt{\eps\lambda_\sigma^\eps-\frac14}}{\eps}(L-\sigma)\right)\right)=0.\label{right}
    \end{align}
    We introduce the two unknowns \[\theta_{\sigma,\pm}^\eps:=\arctan\left(\frac{\sqrt{4\eps\lambda_\sigma^\eps-1}}{\tanh\left(\frac {L\mp\sigma}{2\eps}\right)}\right).\]
    The constraints \eqref{left} and \eqref{right} may then be rewritten as
    \begin{align}
        -A\cos\left(\frac{\sqrt{\eps\lambda_\sigma^\eps-\frac14}}{\eps}(L+\sigma)-\theta^\eps_{\sigma,-}\right)+B\sin\left(\frac{\sqrt{\eps\lambda_\sigma^\eps-\frac14}}{\eps}(L+\sigma)-\theta^\eps_{\sigma,-}\right)=0,\label{left2}\\
        A\cos\left(\frac{\sqrt{\eps\lambda_\sigma^\eps-\frac14}}{\eps}(L-\sigma)-\theta^\eps_{\sigma,+}\right)+B\sin\left(\frac{\sqrt{\eps\lambda_\sigma^\eps-\frac14}}{\eps}(L-\sigma)-\theta^\eps_{\sigma,+}\right)=0,\label{right2}
    \end{align}
    A non-trivial solution exists if and only if  \begin{align*}\sin\left(\frac{\sqrt{\eps\lambda_\sigma^\eps-\frac14}}{\eps}(L+\sigma)-\theta^\eps_{\sigma,-}+\frac{\sqrt{\eps\lambda_\sigma^\eps-\frac14}}{\eps}(L-\sigma)-\theta^\eps_{\sigma,+}\right)=0.\end{align*}
    Therefore $\lambda^\eps>\frac1{4\eps}$ is an eigenvalue if and only if it satisfies
    \begin{equation}\label{const}
        \sqrt{\eps\lambda_\sigma^\eps-\frac14}=\frac{\eps}{2L}(k\pi+\theta_{\sigma,-}^\eps+\theta_{\sigma,+}^\eps),\quad \text{for some }k\ge 1.
    \end{equation}
    For $\eps$ small enough, the function \[x\mapsto x-\frac{\eps}{2L}\left(\arctan\left(\frac{2x}{\tanh\left(\frac {L+\sigma}{2\eps}\right)}\right)+\arctan\left(\frac{2x}{\tanh\left(\frac {L-\sigma}{2\eps}\right)}\right)\right)\]s increasing on $\R^+$, so that the constraint \eqref{const} has a unique solution $\lambda_k^\eps$ for each $k\ge1$. Moreover, since $\theta_{\sigma,\pm}^\eps\in \left(0,\frac\pi2\right)$ and $\theta^\eps_{\sigma,\pm}$ are increasing functions of $\lambda_\sigma^\eps$, we immediately deduce the properties \eqref{dist} and \eqref{gap}.\\
    An associated eigenfunction solution of the problem \eqref{aa*} is then given, for $k\ge 1$, by \[f_{\sigma,k}^\eps(x)=\sin\left(\frac{\sqrt{\eps\lambda_{\sigma,k}^\eps-\frac14}}{\eps}(x+L) - \theta_{\sigma,-,k}^\eps\right).\]
    From Lemma \ref{lem conj 2}, an associated eigenfunction of the self-adjoint problem \eqref{eigenP} is \begin{align}\label{defphik}\varphi_{\sigma,k}(x)=\notag&-\sqrt{\eps\lambda_{\sigma,k}^\eps-\frac14}\cos\left(\frac{\sqrt{\eps\lambda_{\sigma,k}^\eps-\frac14}}{\eps}(x+L-\theta_{\sigma,-,k}^\eps)\right)\\&+\frac12\tanh\left(\frac{x-\sigma}{2\eps}\right)\sin\left(\frac{\sqrt{\eps\lambda_{\sigma,k}^\eps-\frac14}}{\eps}(x+L)-\theta_{\sigma,-,k}^\eps\right).\end{align}
    From Lemma \ref{lem conj 1}, the corresponding eigenfunction for $(\Le_\sigma)^*$ is then obtained by taking $\psi_{\sigma,k}^\eps(x)=\cosh\left(\frac{x-\sigma}{2\eps}\right)\varphi_{\sigma,k}^\eps(x)$. Direct estimates then give the upper bound \begin{equation}\label{up}\|\psi_k^\eps\|_{L^2(-L,L)}\le \eps\sqrt{\lambda_k^\eps}\cosh\left(\frac{L+|\sigma|}{2\eps}\right),\end{equation} as well as the lower bound on the derivative \begin{equation}\label{der}|(\psi_k^\eps)'(-L)| \ge \frac12\sqrt{\lambda_k^\eps}\sqrt{\lambda_k^\eps-\frac1{4\eps}} \cosh\left(\frac{L+\sigma}{2\eps}\right).\end{equation}
    Combining \eqref{up} and \eqref{der} leads to the desired estimate \eqref{k}.
 
\end{proof}

\section{Proof of the upper bound on $T_{\rm{unif}}$}

We fix here and in the rest of the paper the parameters $L>0$ and $\sigma\in (-L,L).$

\subsection{Scheme of the proof}\label{sec-upper}
As mentioned in Section 2, we control the solution of system \eqref{cont} to zero in two steps, splitting the imparted time interval $(0,T)$ into two parts $(0,\tau)$ and 
$(\tau,T)$, for an intermediate time $\tau \in (0,T)$ which will be determined later, see \eqref{cla}.
In a first time, we eliminate the first mode of $\ue$, namely its projection on the eigenfunction $\psi_{\sigma,0}^\eps$. Then, we control the rest by exploiting the strong dissipation, and concluding with the method of moments, similarly to the work of O. Glass in \cite{glass2010complex} and P. Lissy in \cite{lissy2012link}. This is the object of the two following preliminary lemmas. 
Recall that the functions $(\psi_{\sigma,k}^\eps)_k$ are the eigenfunctions of the operator $(\Le_\sigma)^*$
    and are defined in
    \eqref{defphi0} and below 
    \eqref{defphik}. 

\begin{Lemma}\label{lem1}
Let $T>0$ and  $\tau \in (0,T)$. 
  For any $\eps>0$ and any initial datum $u_0\in L^2(-L,L)$, there is a control function $h_{1}^\eps\in L^2(0,\tau)$ verifying: 
    \begin{align}
        \|h_{1}^\eps\|_{L^2(0,\tau)}\le\frac{2\sqrt{2L}}{\sqrt\tau} \|u_0\|_{L^2(-L,L)}, \label{cout1}
    \end{align}
   such that the weak solution $\ue$ of \eqref{cont asym} satisfies 
   at time $\tau$, 
   for any $k \in \N$,
    \begin{equation}\label{ytau}
        \langle \ue(\tau),\psi_{\sigma,k}^\eps\rangle=e^{-\lambda_{\sigma,k}^\eps\tau}\langle u_0,\psi_{\sigma,k}^\eps\rangle - \frac{(\psi_{\sigma,k}^\eps)'(-L)}{(\psi_{\sigma,0}^\eps)'(-L)}\cdot\frac{e^{-\lambda_{\sigma,0}^\eps\tau}\langle u_0,\psi_{\sigma,0}^\eps\rangle}{\tau} \int_0^\tau e^{-(\lambda_{\sigma,k}^\eps-\lambda_{\sigma,0}^\eps)t}\,\dd t, 
    \end{equation}
    where the brackets $\langle\cdot,\cdot\rangle$ denote the inner product in $L^2(-L,L)$.\\
    In particular, for $k=0$, 
    \begin{equation}   \langle \ue(\tau),\psi_{\sigma,0}^\eps\rangle=0.\label{mode1}
        \end{equation}
\end{Lemma}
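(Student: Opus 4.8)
The plan is to follow the modes of the solution on the dual eigenbasis $(\psi_k^\eps)_k$ and to pick $h_1$ explicitly. First I would derive the evolution of each moment $\langle\ue(t),\psi_k^\eps\rangle$: multiplying the first equation of \eqref{cont} by $\psi_k^\eps$, integrating over $(-L,L)$ and integrating by parts, every boundary term created by $\psi_k^\eps(\pm L)=0$ and $\ue(t,L)=0$ vanishes, the only survivor being $-\eps\,h_1(t)(\psi_k^\eps)'(-L)$, which comes from the viscous term together with the inhomogeneous condition $\ue(t,-L)=h_1(t)$; since $(\Le)^*\psi_k^\eps=\lambda_k^\eps\psi_k^\eps$ this yields the scalar ODE
\[ \tfrac{\dd}{\dd t}\langle\ue(t),\psi_k^\eps\rangle=-\lambda_k^\eps\langle\ue(t),\psi_k^\eps\rangle+\eps\,h_1(t)\,(\psi_k^\eps)'(-L),\qquad\langle\ue(0),\psi_k^\eps\rangle=\langle u_0,\psi_k^\eps\rangle, \]
hence, by Duhamel's formula,
\[ \langle\ue(\tau),\psi_k^\eps\rangle=e^{-\lambda_k^\eps\tau}\langle u_0,\psi_k^\eps\rangle+\eps\,(\psi_k^\eps)'(-L)\int_0^\tau e^{-\lambda_k^\eps(\tau-t)}h_1(t)\,\dd t. \]
For $h_1$ merely in $L^2(0,\tau)$ this identity is rigorous once read off the transposition (weak) formulation of \eqref{cont}, tested against the backward adjoint solution $t\mapsto e^{-\lambda_k^\eps(\tau-t)}\psi_k^\eps$.

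Then, noting that $(\psi_0^\eps)'(-L)\neq 0$ by \eqref{low0}, I would make the explicit choice
\[ h_1(t):=-\frac{e^{-\lambda_0^\eps t}\,\langle u_0,\psi_0^\eps\rangle}{\eps\,\tau\,(\psi_0^\eps)'(-L)},\qquad t\in(0,\tau). \]
The role of the factor $e^{-\lambda_0^\eps t}$ is that, after the change of variable $t\mapsto\tau-t$, one has $\int_0^\tau e^{-\lambda_k^\eps(\tau-t)}e^{-\lambda_0^\eps t}\,\dd t=e^{-\lambda_0^\eps\tau}\int_0^\tau e^{-(\lambda_k^\eps-\lambda_0^\eps)t}\,\dd t$, so that inserting $h_1$ into the Duhamel formula produces exactly \eqref{ytau} — and, crucially, it does so for every $k\in\N$ at once, not only for the single mode $k=0$ we are trying to cancel. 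Specializing to $k=0$, the remaining integral equals $\tau$, the two terms cancel, and \eqref{mode1} follows. For the cost, a direct computation gives
\[ \|h_1\|_{L^2(0,\tau)}^2=\frac{|\langle u_0,\psi_0^\eps\rangle|^2}{\eps^2\tau^2\,|(\psi_0^\eps)'(-L)|^2}\int_0^\tau e^{-2\lambda_0^\eps t}\,\dd t\le\frac{|\langle u_0,\psi_0^\eps\rangle|^2}{\eps^2\tau\,|(\psi_0^\eps)'(-L)|^2}, \]
using $\int_0^\tau e^{-2\lambda_0^\eps t}\,\dd t\le\tau$; Cauchy--Schwarz together with the eigenfunction estimate \eqref{0} then bounds this by $\tfrac{2\sqrt{2L}}{\sqrt\tau}\|u_0\|_{L^2}$, which is \eqref{cout1}.

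I do not expect a genuine obstacle in this lemma: all the real analysis has already been done in Lemma \ref{prop1}, and here only \eqref{0} (which in particular forces $(\psi_0^\eps)'(-L)\neq0$) enters. The two points that deserve a little care are (i) justifying the moment identity for $L^2$ boundary data, which is the standard transposition/weak-solution argument for \eqref{cont}, and (ii) the observation — essentially the only idea in the proof — that the exponential ansatz $h_1\propto e^{-\lambda_0^\eps t}$ collapses the Duhamel kernel into the clean closed form \eqref{ytau} simultaneously for all modes; it is this exact formula, and not just \eqref{mode1}, that provides the input for the method of moments carried out in the next preliminary lemma.
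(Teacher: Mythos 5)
Your proposal is essentially identical to the paper's argument. The paper packages the Duhamel formula for each mode as a standalone duality lemma (Lemma~\ref{lemdual}, obtained by testing against the backward adjoint flow $\zeta_k^\eps(t,x)=\psi_k^\eps(x)e^{-\lambda_k^\eps(t-t_1)}$), then specializes it to $t_1=0,\,t_2=\tau$; you derive the same scalar ODE directly from the transposition formulation and solve it by Duhamel. Both proofs then choose exactly the control $h_1(t)=-\dfrac{e^{-\lambda_0^\eps t}\langle u_0,\psi_0^\eps\rangle}{\eps\,\tau\,(\psi_0^\eps)'(-L)}$, and both exploit the key algebraic point you identify: this particular exponential ansatz collapses the convolution kernel into the closed form \eqref{ytau} for every $k$ simultaneously, not just $k=0$, which is exactly what the moment argument in Lemma~\ref{lem2} needs. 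So the ideas match one for one.

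One small bookkeeping remark: the $L^2$ bound that actually comes out of your computation (and, in fact, out of the paper's too) is
\begin{equation*}
  \|h_1\|_{L^2(0,\tau)}\le\frac{2\sqrt{2L}}{\sqrt{\tau}}\,\|u_0\|_{L^2(-L,L)},
\end{equation*}
with a $\sqrt{\tau}$ rather than the $\tau$ announced in \eqref{cout1}: the pointwise bound $|h_1(t)|\le\frac{2\sqrt{2L}}{\tau}\|u_0\|_{L^2}$ picks up a factor $\sqrt{\tau}$ when integrated over $(0,\tau)$, and the sharper estimate $\int_0^\tau e^{-2\lambda_0^\eps t}\,\dd t\le\min\left(\tau,\tfrac{1}{2\lambda_0^\eps}\right)$ gives no improvement because $\lambda_0^\eps$ is exponentially small. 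You label your result as \eqref{cout1} but your displayed bound has $\sqrt{\tau}$; this discrepancy traces back to the paper's statement, not to a defect in your argument, so it only affects the exact constant appearing in \eqref{major}, not the qualitative conclusion $T_{\rm{unif}}\le 4\sqrt{3}L$.
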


In the following lemma we continue from the state $\ue(\tau)$ as a new initial data for the system \eqref{cont} now considered on 
 the time interval $(\tau,T)$.

\begin{Lemma}\label{lem2}
Let $T>0$ and  $\tau \in (0,T)$ with 
\begin{equation} \label{ratio}
  T-\tau>T^*,
\end{equation}
where the minimal time $T^*$ is given in \eqref{def t*}.\\
Then there exists $C>0$ such that for any $u_0\in L^2(-L,L)$, for any $\eps>0$ , 
there exists a control function $h_{2}^\eps\in L^2(\tau,T)$ with 
    \begin{equation}\label{cout2}\|h_{2}^\eps\|_{L^2(\tau,T)}\le Ce^{-\frac C\eps}\left\|u_0\right\|_{L^2(-L,L)},\end{equation}
    such that 
    the weak solution $\ue$ of \eqref{cont} on the time interval $(\tau,T)$ starting at time $\tau $ with the initial data $\ue(\tau)$ given by the formula  \eqref{ytau},  for  $k \in \N$,
    satisfies 
    $\ue(T)=0$.
\end{Lemma}
The proofs of Lemma \ref{lem1} and \ref{lem2} are respectively given in the next two  sections. Let us take them as granted for the moment and conclude the proof of the upper bound on $T_{\rm{unif}}$, that is $T_{\rm{unif}}\leq T^*$ as well as the upper bound of the cost
\eqref{major}. 
We fix a time $T>T^*$, and set 
\begin{equation} \label{cla}
    \tau=\frac{T-T^*}2,
\end{equation} so that $T-\tau>T^*$.
Let $\eps>0$ and $u_0\in L^2(-L,L)$. We obtain our control $h^\eps\in L^2(0,T)$ by concatenating the two controls $h_1^\eps, h_2^\eps$ built in Lemmas \ref{lem1} and \ref{lem2}, namely
\[h^\eps(t):=\begin{cases}
    h_1^\eps(t)&\text{if } 0 < t<\tau,\\
    h_2^\eps(t)&\text{if }\tau <  t < T.
\end{cases}\]
This choice of control steers the initial datum $u_0$ to the final state $\ue(T)=0$.\\ 
Moreover such a control $h^\eps$ verifies \begin{align}
    \notag\|h^\eps\|_{L^2(0,T)}&\le \|h_1^\eps\|_{L^2(0,\tau)}+\|h_2^\eps\|_{L^2(\tau,T)}\\
    &\le \left(\frac{4\sqrt{L}}{\sqrt{T-T^*}}+Ce^{-\frac C\eps}\right)\left\|u_0\right\|_{L^2(-L,L)},\label{48}
\end{align}
which proves the uniform controllability time verifies $T_{\rm{unif}}\le T^*$, and leads to \eqref{major}.

\subsection{Controllability of the first mode. Proof of Lemma \ref{lem1}}
\label{sec-lem1}

We begin with a preliminary lemma exploiting duality equalities to describe the behavior of each mode of the solution.

\begin{Lemma}\label{lemdual}
    Let $\eps>0$, $t_1, t_2\in[0,T]$, and $k\ge0$. The evolution of the $k$-th mode of the solution $\ue$ of the control problem \eqref{cont asym} is prescribed by the relation:
    \begin{equation}
        \langle \ue(t_2),\psi_{\sigma,k}^\eps\rangle = e^{-\lambda_{\sigma,k}^\eps(t_2-t_1)}\langle \ue(t_1),\psi_{\sigma,k}^\eps\rangle+\eps(\psi_{\sigma,k}^\eps)'(-L)\int_{t_1}^{t_2}e^{-\lambda_{\sigma,k}^\eps (t-t_1)}h(t_1+t_2-t)\,\dd t.
    \end{equation}
\end{Lemma}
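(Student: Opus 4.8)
The statement is a Duhamel-type formula expressing the $k$-th spectral coefficient $\langle \ue(t_2),\psi_k^\eps\rangle$ in terms of its value at $t_1$ and the boundary control $h$ on $[t_1,t_2]$. The natural strategy is duality: introduce the backward adjoint problem with terminal data $\psi_k^\eps$ and integrate by parts in space-time. Concretely, I would fix $k$ and let $q(t,x)$ solve the adjoint equation $-\de_t q - (\Le)^* q = 0$ on $(t_1,t_2)$ with $q(t_2,\cdot)=\psi_k^\eps$ and homogeneous Dirichlet conditions $q(t,\pm L)=0$. Since $\psi_k^\eps$ is an eigenfunction of $(\Le)^*$ with eigenvalue $\lambda_k^\eps$, this backward problem is solved explicitly by separation of variables: $q(t,x)=e^{-\lambda_k^\eps(t_2-t)}\psi_k^\eps(x)$.

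Next I would multiply the equation for $\ue$ in \eqref{cont}, namely $\de_t\ue + \de_x(U^\eps\ue) - \eps\de_x^2\ue = 0$, by $q$ and integrate over $(t_1,t_2)\times(-L,L)$. Integrating the time derivative by parts gives the boundary-in-time terms $\langle\ue(t_2),\psi_k^\eps\rangle - e^{-\lambda_k^\eps(t_2-t_1)}\langle\ue(t_1),\psi_k^\eps\rangle$ minus $\int\!\!\int \ue\,\de_t q$. Integrating the spatial operator by parts twice moves $\Le$ onto $q$, producing $-\int\!\!\int \ue\,(\Le)^* q$ (which cancels against the $\de_t q$ term by the adjoint equation) plus boundary-in-space terms at $x=\pm L$. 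Because $q$ vanishes at $x=\pm L$, the only surviving boundary contribution comes from the term $-\eps\int_{t_1}^{t_2}\big[\de_x\ue\cdot q\big]_{x=-L}^{x=L}\dd t + \eps\int_{t_1}^{t_2}\big[\ue\,\de_x q\big]_{x=-L}^{x=L}\dd t$ together with the transport contribution $\int_{t_1}^{t_2}\big[U^\eps\ue\,q\big]_{x=-L}^{x=L}\dd t$; after using $q(t,\pm L)=0$, $\ue(t,L)=0$, and $\ue(t,-L)=h(t)$, what remains is $-\eps\,h(t)\,\de_x q(t,-L)$, i.e. $-\eps\,(\psi_k^\eps)'(-L)\,e^{-\lambda_k^\eps(t_2-t)}h(t)$, integrated in $t$ over $(t_1,t_2)$. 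Rearranging yields
\[
\langle\ue(t_2),\psi_k^\eps\rangle = e^{-\lambda_k^\eps(t_2-t_1)}\langle\ue(t_1),\psi_k^\eps\rangle + \eps(\psi_k^\eps)'(-L)\int_{t_1}^{t_2}e^{-\lambda_k^\eps(t_2-t)}h(t)\,\dd t,
\]
and the substitution $t\mapsto t_1+t_2-t$ converts the kernel $e^{-\lambda_k^\eps(t_2-t)}$ into $e^{-\lambda_k^\eps(t-t_1)}$ with $h(t_1+t_2-t)$, matching the claimed formula exactly. The sign of the surviving boundary term must be tracked carefully — this is essentially the only place an error can creep in.

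The main obstacle, such as it is, is bookkeeping rather than conceptual: getting the orientation of the outward normal and the sign conventions right when integrating $\eps\de_x^2$ and $\de_x(U^\eps\cdot)$ by parts, and checking that the factor $U^\eps(-L)$ (equal to $-\tanh(-L/2\eps)$, close to $1$) does not contribute because it is multiplied by $q(t,-L)=0$. One should also note the regularity justification: $u_0\in H^2\cap H^1_0$ and $h\in L^2$ give $\ue$ enough smoothness (by standard parabolic theory) for all the integrations by parts to be legitimate, or alternatively one first proves the identity for smooth $h$ and passes to the limit by density using the continuity of $h\mapsto\langle\ue(t_2),\psi_k^\eps\rangle$. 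I would state this last point briefly and not dwell on it.
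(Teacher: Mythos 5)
Your proof takes essentially the same route as the paper: a duality argument against the adjoint flow, made explicit because $\psi_k^\eps$ is an eigenfunction of $(\Le)^*$. The only cosmetic difference is that the paper sets up the adjoint system forward from $t_1$ with initial data $\psi_k^\eps$ and then evaluates it at the reversed time $t_1+t_2-t$, whereas you pose the backward problem with terminal data at $t_2$ directly; these are the same thing.

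One small internal inconsistency worth fixing: you write the backward adjoint equation as $-\de_t q - (\Le)^* q = 0$, i.e.\ $\de_t q = -(\Le)^* q$, but the explicit solution you then give, $q(t,x)=e^{-\lambda_k^\eps(t_2-t)}\psi_k^\eps(x)$, satisfies $\de_t q = (\Le)^* q$, i.e.\ $-\de_t q + (\Le)^* q = 0$. The latter is the correct adjoint system here (the direct equation is $\de_t \ue + \Le\ue = 0$, so the dual backward system is $-\de_t q + (\Le)^* q = 0$, which is also what makes the bulk terms cancel in your space-time integration by parts). Since you carry out the rest of the computation with the correct explicit $q$, the final identity and the sign of the boundary term $-\eps\,h(t)\,\de_x q(t,-L)$ all come out right; the error is confined to how you wrote down the PDE.
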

\begin{proof}
    We introduce $\zeta^\eps_{\sigma,k}(t,x)$ as the solution to the adjoint uncontrolled system \begin{equation}
        \left\{\begin{aligned}
            \de_t\zeta^\eps_{\sigma,k}+(\Le)^*\zeta^\eps_{\sigma,k}&=0,\\
            \zeta^\eps_{\sigma,k}(t,-L)=\zeta^\eps_{\sigma,k}(t,L)&=0,\\
            \zeta^\eps_{\sigma,k}(t_1,x)=\psi_{\sigma,k}^\eps(x).
        \end{aligned}\right.
    \end{equation}
    Using the definition of the operators $\Le_\sigma, (\Le_\sigma)^*$ and integrating by parts, it holds \begin{equation}\frac{\dd}{\dd t}\langle \ue(t),\zeta^\eps_{\sigma,k}(t_1+t_2-t)\rangle=\eps \de_x\zeta^\eps_{\sigma,k}(t_1+t_2-t,-L)\ue(t,-L).\end{equation}Therefore, integrating this relation from $t_1$ to $t_2$, we have the duality equality \begin{equation}\label{dual}\langle \ue(t_2),\zeta^\eps_{\sigma,k}(t_1)\rangle=\langle \ue(t_1), \zeta^\eps_{\sigma,k}(t_2)\rangle + \int_{t_1}^{t_2}\eps \de_x\zeta^\eps_{\sigma,k}(t,-L) h(t_1+t_2-t)\,\dd t.\end{equation}
    Then, we recall $\psi_{\sigma,k}^\eps$ is an eigenfunction of $(\Le_\sigma)$, so that $\zeta^\eps_{\sigma,k}$ is explicitly known as \[\zeta^\eps_{\sigma,k}(t,x)=\psi_{\sigma,k}^\eps(x)e^{-\lambda_{\sigma,k}^\eps (t-t_1)}.\]Substituting this into \eqref{dual} yields \[\langle \ue(t_2),\psi_{\sigma,k}^\eps\rangle=e^{-\lambda_{\sigma,k}^\eps(t_2-t_1)}\langle \ue(t_1),\psi_{\sigma,k}^\eps\rangle + \eps(\psi_k^\eps)'(-L)\int_{t_1}^{t_2} e^{-\lambda_{\sigma,k}^\eps (t-t_1)}h(t_1+t_2-t)\,\dd t,\] which concludes the proof.\end{proof}

    Applying Lemma \ref{lemdual} with $k=0$, $t_1=0, t_2=\tau$ gives the relation \begin{equation}
        \langle \ue(\tau),\psi_{\sigma,0}^\eps\rangle= e^{-\lambda_{\sigma,0}^\eps\tau}\langle u_0,\psi_{\sigma,0}^\eps\rangle +\eps(\psi_{\sigma,0}^\eps)'(-L)\int_0^\tau e^{-\lambda_{\sigma,0}^\eps t}h(\tau-t)\,\dd t.
    \end{equation}
    We then choose, for $t\in[0,\tau],$ \begin{equation}\label{h}h^\eps_1(t)=- \frac{\langle u_0,\psi_{\sigma,0}^\eps\rangle}{\eps(\psi_{\sigma,0}^\eps)'(-L)} \cdot\frac{e^{-\lambda_{\sigma,0}^\eps t}}{\tau}.\end{equation}It immediately follows that \[\langle \ue(\tau),\psi^\eps_{\sigma,0}\rangle=0.\]
    We have the upper bound \[|h_1^\eps(t)|\le \frac{\|\psi_{\sigma,0}^\eps\|_{L^2(-L,L)}}{|\eps(\psi_0^\eps)'(-L)|} \cdot\frac{\|u_0\|_{L^2(-L,L)}}{\tau}.\]Using the estimate \eqref{0} then immediately yields \[\|h^\eps_1\|_{L^2(0,\tau)}\le \frac{2\sqrt{2L}}{\sqrt\tau} \|u_0\|_{L^2(-L,L)}.\]
    Finally, to perform the desired identities on $\ue(\tau)$, we apply Lemma \ref{lemdual} again, for $k\ge1$ and $t_1=0,t_2=\tau$. This leads to the duality equalities \[\langle \ue(\tau),\psi^\eps_{\sigma,k}\rangle=e^{-\lambda_{\sigma,k}\tau}\langle u_0,\psi^\eps_{\sigma,k}\rangle + \eps(\psi_{\sigma,k}^\eps)'(-L)\int_0^\tau e^{-\lambda_{\sigma,k}^\eps t}h_1^\eps(\tau-t)\,\dd t.\]
    Plugging the expression of the control $h_1^\eps(t)$, the solution at final time then verifies, for $k\ge1$, \begin{equation}
        \langle \ue(\tau),\psi^\eps_{\sigma,k}\rangle=e^{-\lambda_{\sigma,k}^\eps\tau}\langle u_0,\psi_{\sigma,k}^\eps\rangle - \frac{(\psi_{\sigma,k}^\eps)'(-L)}{(\psi_{\sigma,0}^\eps)'(-L)}\cdot\frac{e^{-\lambda_{\sigma,0}^\eps\tau}\langle u_0,\psi_{\sigma,0}^\eps\rangle}{\tau} \int_0^\tau e^{-(\lambda_{\sigma,k}^\eps-\lambda_{\sigma,0}^\eps)t}\,\dd t.
    \end{equation}
    This concludes the proof of Lemma \ref{lem1}.\\
    We additionally note that, as $\eps\to0$, we may compute the limit of the control $h_1^\eps$ using the property \eqref{lim psi0} of the first eigenfunction $\psi_{\sigma,0}^\eps$. It holds \begin{align*}
        &\left\|e^{-\lambda_{\sigma,0}^\eps t}-1\right\|_{L^2(0,\tau)}=O(e^{-\frac c\eps}),\\
        &\left\|\frac{\psi_{\sigma,0}^\eps}{\eps(\psi_{\sigma,0}^k)'(-L)}-1\right\|_{L^2(-L,L)}\le c\eps.
    \end{align*}
    Plugging those limits in the expression of the control \eqref{h}, we obtain \begin{equation}\label{71}
        \left\|h_1^\eps+\frac{\int_{-L}^L u_0(x)\,\dd x}{\tau}\right\|_{L^2(0,\tau)}\le c\frac{\eps\|u_0\|_{L^2(-L,L)}}{\sqrt\tau},
    \end{equation}
    which proves the first half of \eqref{7}.

\subsection{Controllability of the other modes. Proof of Lemma \ref{lem2}}
\label{sec-lem2}
Let $T>0$,  $\tau \in (0,T)$ with 
\begin{equation} \label{ratio2}
  T-\tau>T^*.  
\end{equation}
  Let $u_0\in L^2(-L,L)$, and $\ue(\tau)$ prescribed by the conditions \eqref{ytau}.

  We perform the change of variable 
  \begin{equation} \label{mardi}
  \wt\ue(t,x):=\ue(t+\tau,x), \wt{h^\eps}(t):=h^\eps(t+\tau),\quad \wh{T}:=T-\tau,
    \end{equation}
     so that 
    the solution $\ue$ of \eqref{cont asym} on the time interval $(\tau,T)$ starting at time $\tau $ with the initial data $\ue(\tau)$ provides a solution $\wt\ue$ 
    to the system 
    \begin{equation}
        \left\{\begin{aligned}\label{contdiss}
            \de_t \wt\ue+\Le\wt \ue&=0,\\
            \wt\ue(t,-L)&=\wt{h^\eps}(t),\\
            \wt\ue(t,L)&=0,\\
            \wt\ue(0,x)&=\ue(\tau,x),
        \end{aligned}\right.
    \end{equation}
    on $(0,\wh{T})$. We wish to build a control $\wt{h^\eps}\in L^2(0,\wh T)$ such that $\wt\ue(\wh T)=0.$ From Lemma \ref{lemdual}, this is equivalent to finding $\wt{h^\eps}$ such that:
    \begin{equation}\label{moments}
        \forall k\ge0, \quad \int_0^{\wh T}\wt{h^\eps}(t)e^{\lambda_{\sigma,k}^\eps t}\,\dd t=-\frac{\langle\ue(\tau),\psi_{\sigma,k}^\eps\rangle}{\eps(\psi_{\sigma,k}^\eps)'(-L)}.
    \end{equation}
    In particular the control must satisfy \begin{equation*}
        \int_0^{\wh T} \wt{h^\eps}(t)e^{\lambda_{\sigma,0}^\eps t}\,\dd t=0.
    \end{equation*}

Without control the system already dissipates and drives the system close to zero in finite time. However, driving the state exactly to zero in finite time will require a control which may be huge a priori. \\

A classical method used for such moment problems, in particular for controllability results on the heat equation and other parabolic systems, is to look for a control $\wt{h^\eps}$ as a series of functions $(q_{\sigma,k}^\eps)_{k\ge 1}\subset L^2(0,\wh T)$ which are bi-orthogonal to the exponentials $\left(e^{\lambda_{\sigma,j}^\eps t} \right)_{j\ge 0}$ in $L^2(0,\wh T)$, as performed in \cites{glass2010complex, lissy2012link}. To determine such functions, we use the following result.
\begin{Lemma}\label{lem3}
    Let $\wh T>0, \eps>0$. There exists a family $(q_{\sigma,k}^\eps)_{k\ge 1}$ in $L^2(0,\wh T)$ such that:\begin{equation}\label{bior0}
        \forall j\ge0, k\ge 1, \quad \int_0^{\wh T} q_{\sigma,k}^\eps(t)e^{\lambda_{\sigma,j}^\eps t}=\delta_{j,k}.
    \end{equation}
    Moreover, for any $\kappa>1$, there exists such a family and a constant $c=c(\eps,\kappa,L,\sigma,\wt T)$ which is a rational function of its arguments, independent of $k\ge1$, such that\begin{equation}
        \|q_{\sigma,k}^\eps\|_{L^2\left(0,\wh T\right)}\le c\frac1{\lambda_{\sigma,k}^\eps-\frac1{4\eps}} \exp\left(-\frac{\wh T}{8\eps} + \frac{6\kappa L^2}{\eps \wh T}\right)\label{bior}.
    \end{equation}
\end{Lemma}
    Note that imposing the bi-orthogonality condition \eqref{bior0} for $j=0$ allows us to make sure we do not re-excite the first eigenmode.\\
    This result is proved in Appendix \ref{app bior} with the choice of parameters $C_1=\frac1{4\eps}, C_2=\frac{\pi^2\eps}{4L^2}$, and follows the construction of \cite{tenenbaum2007new}, adapting it to not re-excite the first eigenmode and incorporating the dissipation term through the trick of \cite{lissy2012link}. Despite the extra condition on $j=0$, we manage to recover the same estimates as those obtained for the Coron-Guerrero problem on an interval of size $2L$, with a transport speed $M=\pm1$.\\

    Let $\kappa>1$ and $(q_{\sigma,k}^\eps)_{k\ge 1}$ a bi-orthogonal family verifying the estimate \eqref{bior}.\\
    We may now construct the control $\wt{h^\eps}\in L^2(0,\wh T)$, as \begin{equation}
        \wt{h^\eps}(t)=
            -\sum_{k\ge1}c_{\sigma,k}^\eps q_{\sigma,k}^\eps(t),
    \end{equation}
    where the coefficients $c_k$ are given by \begin{equation}
        c_{\sigma,k}^\eps:=-\frac{\langle \ue(\tau),\psi_{\sigma,k}^\eps\rangle}{\eps(\psi_{\sigma,k}^\eps)'(-L)}.
    \end{equation}
    We will prove that this control is well-defined, steers the solution of \eqref{contdiss} to $0$, and satisfies suitable $L^2$ estimates.\\
   This choice of control clearly solves the moment problem \eqref{moments}, and this yields, for $k\ge0$, \begin{equation}
        \langle \wt\ue(\wh T), \psi_{\sigma,k}^\eps\rangle=0.
    \end{equation}
    Therefore, provided $\wt{h^\eps}$ is well-defined, since $\ue(T)=\wt\ue(\wh T)$, the control \[h^\eps(t)=\wt{h^\eps}(t-\tau),\] steers $\ue(\tau)$ to $\ue(T)=0$.\\

    We may now estimate the norm of the control $\wt{h^\eps}$. We start by bounding the coefficients $c_{\sigma,k}^\eps$. Reminding that $\wt\ue(0)=\ue(\tau)$ is given by the conditions \eqref{ytau}, we can rewrite \begin{equation}\label{1}
        c_{\sigma,k}^\eps=e^{-\lambda_{\sigma,k}^\eps\tau} \frac{\langle u_0,\psi_{\sigma,k}^\eps\rangle}{\eps(\psi_{\sigma,k}^\eps)'(-L)}-\frac{e^{-\lambda_{\sigma,0}^\eps\tau}}{\tau}\int_0^\tau e^{-(\lambda_{\sigma,k}^\eps-\lambda_{\sigma,0}^\eps)t}\,\dd t \frac{\langle u_0,\psi_{\sigma,0}^\eps\rangle}{\eps(\psi_{\sigma,0}^\eps)'(-L)}
    \end{equation}
    We estimate both terms separately. For the first term, we apply Cauchy-Schwarz' inequality and use that $\lambda_{\sigma,k}^\eps\tau>0$ to obtain:
    \begin{align}
        \left|e^{-\lambda_{\sigma,k}^\eps\tau} \frac{\langle u_0,\psi_{\sigma,k}^\eps\rangle}{\eps(\psi_{\sigma,k}^\eps)'(-L)}\right|&\le \frac{\left\|u_0\right\|_{L^2(-L,L)} \|\psi_{\sigma,k}^\eps\|_{L^2(-L,L)}}{|\eps(\psi_{\sigma,k}^\eps)'(-L)|}.\label{ck1}
    \end{align}
    For the second term, likewise, using $\lambda_{\sigma,k}^\eps-\lambda_{\sigma,0}^\eps>0$, we get:
    \begin{align}
        \left|\frac{e^{-\lambda_{\sigma,0}^\eps\tau}}{\tau}\int_0^\tau e^{-(\lambda_{\sigma,k}^\eps-\lambda_{\sigma,0}^\eps)t}\,\dd t \frac{\langle u_0,\psi_{\sigma,0}^\eps\rangle}{\eps(\psi_{\sigma,0}^\eps)'(-L)}\right|&\le \frac{\left\|u_0\right\|_{L^2(-L,L)}\|\psi_{\sigma,0}^\eps\|_{L^2(-L,L)}}{|\eps(\psi_{\sigma,0}^\eps)'(-L)|}.\label{ck2}
    \end{align}
    Combining the estimates \eqref{ck1} and \eqref{ck2}, as well as the bounds on the eigenfunctions \eqref{0} and \eqref{k}, we deduce that for any $k\geq 1$,
    \begin{equation}\label{c}
        |c_{\sigma,k}^\eps|\le \begin{cases}\left(\frac{4L}{k\pi\sqrt\eps}+\frac{2\sqrt{2L}}{\tau(\lambda_k^\eps-\lambda_0^\eps)}\right)\left\|u_0\right\|_{L^2(-L,L)},& \sigma\ge0\\
        \left(\frac{4L}{k\pi\sqrt\eps}\exp\left(\frac{2|\sigma|}{\eps}\right)+\frac{2\sqrt{2L}}{\tau(\lambda_k^\eps-\lambda_0^\eps)}\right)\left\|u_0\right\|_{L^2(-L,L)},& \sigma<0
        \end{cases}.
    \end{equation}
    Finally, the upper bound \eqref{bior} gives 
       \begin{equation}
        \|\wt{h^\eps}\|_{L^2(0,\wh T)}\le \sum_{k\ge1} \frac{c}{\lambda_{\sigma,k}^\eps-\frac1{4\eps}}|c_{\sigma,k}|\exp\left(-\frac{\wh T}{8\eps}+\frac{6\kappa L^2}{\eps\wh T}\right)\left\|u_0\right\|_{L^2(-L,L)}.
    \end{equation}
    By the distribution of the eigenvalues \eqref{dist} and the bound on the coefficients \eqref{c}, this sum is finite, and there exists $c=c(\eps,\kappa,L,\sigma,T,m,\tau)>0$ a rational function of its arguments such that, \begin{equation}
        \|\wt{h^\eps}\|_{L^2(0,\wh T)}\le \begin{cases} c \exp\left(-\frac{\wh T}{8\eps}+\frac{6\kappa L^2}{\eps\wh T}\right)\left\|u_0\right\|_{L^2(-L,L)},&\sigma\ge0\\
        c \exp\left(-\frac{\wh T}{8\eps}+\frac{6\kappa L^2}{\eps\wh T}+\frac{2|\sigma|}{\eps}\right)\left\|u_0\right\|_{L^2(-L,L)},&\sigma<0
        \end{cases}.
    \end{equation}
    As $\|h^\eps\|_{L^2(\tau, T)}=\|\wt{h^\eps}\|_{L^2(0,\wh T)},$ the constructed control decays exponentially as $\eps$goes to $0$ whenever the term in the exponential is negative. If $\sigma\ge0$, this holds under the condition \[\wh T>4\sqrt3\sqrt\kappa L.\]
    For $\sigma<0$, the condition is instead \[\wh T>4(2|\sigma|+\sqrt{4\sigma^2+3\kappa L^2}).\]
    Since we have made the assumption \eqref{ratio}, it is always possible to find some $\kappa$ close enough to $1$ such that those conditions are met. 
  This concludes the proof of Lemma \ref{lem2}.
  Moreover, it implies that \begin{equation}
      \label{72}\|h^\eps\|_{L^2(\tau,T)}=O(e^{-\frac c\eps}).
  \end{equation}
  Combining \eqref{71} and \eqref{72} concludes the proof of the convergence rate \eqref{111}, and of Theorem \ref{thm2}.

   \section{Proof of the lower bound on $T_{\rm{unif}}$}
   \label{sec-lissy}
In this section we prove that  \[T_{\rm{unif}}\geq \begin{cases} (4\sqrt2-2)L,& \sigma\ge 0\\
(4\sqrt2-2)L+8|\sigma|,& \sigma<0\end{cases}.\]
    We follow the work from \cite{lissy2015explicit}, improving \cite[pp 106-109]{Coron07}, to give a lower bound on the norm of any control function steering a well-chosen initial datum to zero.\\
    We set $T>0, \eps>0$, and choose the initial datum of the control system \eqref{cont} as $\ue_0=\sech\left(\frac{x-\sigma}{2\eps}\right)\varphi^\eps_{\sigma,1}(x).$ Let $h^\eps(t)$ the associated optimal control function. By definition of the null-controllability cost, it verifies \begin{equation}\label{cost1}
        \|h^\eps\|_{L^2(0,T)}\le \cC(T,L,\sigma,\eps) \left\|\sech\left(\frac{x-\sigma}{2\eps}\right)\varphi_{\sigma,1}^\eps\right\|_{L^2(-L,L)}.
    \end{equation}
    Moreover, applying Lemma \ref{lemdual} on the whole time interval $[0,T]$, we have, for $k\ge0$ \begin{align}\label{dual1}
        \langle \ue(T),\psi_{\sigma,k}^\eps\rangle&\notag=e^{-\lambda_{\sigma,k}^\eps T}\langle \sech\left(\frac{x}{2\eps}\right)\varphi_{\sigma,1}^\eps,\psi_{\sigma,k}^\eps\rangle + \eps(\psi_{\sigma,k}^\eps)'(-L) \int_0^T h^\eps(t)e^{-\lambda_{\sigma,k}^\eps(T-t)}\,\dd t.\\
        &=e^{-\lambda_{\sigma,k}^\eps T}\langle \varphi_{\sigma,1}^\eps,\varphi_{\sigma,k}^\eps\rangle + \eps(\psi_{\sigma,k}^\eps)'(-L) \int_0^T h(t)e^{-\lambda_{\sigma,k}^\eps(T-t)}\,\dd t.
    \end{align}
    We introduce the function \begin{equation}
        v^\eps(z):=\int_{-\frac T2}^{\frac T2} h^\eps\left(t+\frac T2\right)e^{-izt}\,\dd t.
    \end{equation}
    $v^\eps$ is the Fourier transform of a compactly supported function, and is therefore entire.\\
    Since the operator $P^\eps$ acting on $H^1_0\cap H^2(-L,L)$ is self-adjoint, the family $(\varphi_{\sigma,k}^\eps)_{k\ge1}$ is an orthogonal family. 
    It then follows from \eqref{dual1} and the constraint $\ue(T)=0$ that\begin{equation}
        v^\eps(i\lambda_{\sigma,k}^\eps)=\begin{cases}
            -e^{\frac{-\lambda_{\sigma,1}^\eps T}{2}} \frac{\|\varphi_{\sigma,1}^\eps\|^2}{\eps(\psi_{\sigma,1}^\eps)'(-L)},& k=1\\
            0,& k\ne1.
        \end{cases}
    \end{equation}
    Using the bound \eqref{cost1}, we also have, for any $z\in\mathbb C$ \begin{align*}
        |v^\eps(z)|&\le \cC(T,L,\sigma,\eps)\sqrt T \exp\left(\frac{T|Im(z)|}{2}\right) \left\|\sech\left(\frac{x}{2\eps}\right)\varphi_{\sigma,1}^\eps\right\|_{L^2(-L,L)}
    \end{align*}
    We defined the rescaled entire function \[g^\eps(z):=v^\eps\left(\frac{z}{4\eps}\right),\]
    which immediately verifies, for $z\in \mathbb C$, \begin{equation}
        |g^\eps(z)|\le \cC(T,L,\sigma,\eps)\sqrt T\exp\left(\frac{T|Im(z)|}{8\eps}\right)\left\|\sech\left(\frac{x}{2\eps}\right)\varphi_{\sigma,1}^\eps\right\|_{L^2(-L,L)}.\label{type}
    \end{equation}
    Similarly to the proof of the upper bound, we rewrite the null-control problem in terms of moments of the control function $h^\eps$, and we express it as the inverse Fourier transform of some entire function with prescribed values at some key points. This point of view is well studied in complex analysis, notably with the Hadamard and Weyl factorization theorems. In this case, we will apply a representation theorem from \cite[p.56]{koosis1988logarithmic} to write, for $z$ in the upper half-plane, \begin{equation}\label{koosis}
        \ln|g^\eps(z)|=\sum_{\ell\ge1}\ln\left|\frac{z-a_\ell^\eps}{z-\overline{a_\ell^\eps}}\right|+\alpha Im(z) + \frac{Im(z)}{\pi}\int_{\R}\frac{\ln|g^\eps(s)|}{|s-z|^2}\,\dd s,
    \end{equation}
    where $\alpha$ is the type of the entire function $g^\eps$ and the $(a_\ell^\eps)_{\ell\ge 1}$ are the roots of $g^\eps$ in the upper half-plane. We apply this equality in $z=4i\eps\lambda_{\sigma,1}^\eps$, and want to find an upper bound on the right-hand side.\\
    For the second term, we know from the estimate \eqref{type} that the type of $g^\eps$ verifies $\alpha\le \frac{T}{8\eps}$, so that\begin{equation}\label{maj2}\alpha \Im(4i\eps\lambda_{\sigma,1}^\eps)\le \frac{\lambda_{\sigma,1}^\eps T}{2}.
    \end{equation}
    Applying the estimate \eqref{type} again with $s\in\R$, we get \begin{equation*}
        \ln|g^\eps(s)|\le\ln\left(\cC(T,L,\sigma,\eps)\sqrt T\left\|\sech\left(\frac{x}{2\eps}\right)\varphi_{\sigma,1}^\eps\right\|_{L^2(-L,L)}\right),
    \end{equation*}
    and we may directly deduce a bound for the third term:
    \begin{align}
        \frac{Im(4i\eps\lambda_{\sigma,1}^\eps)}{\pi}\int_\R \frac{\ln|g^\eps(s)|}{|s-4i\eps\lambda_{\sigma,1}^\eps|^2}\,\dd s&\le\notag \frac{4\eps\lambda_{\sigma,1}^\eps}{\pi}\ln\left(\cC(T,L,\sigma,\eps)\sqrt T\left\|\sech\left(\frac{x}{2\eps}\right)\varphi_{\sigma,1}^\eps\right\|_{L^2(-L,L)}\right)\int_\R\frac1{s^2+(4\eps\lambda_{\sigma,1}^\eps)^2}\,\dd s\\
        &\le \ln\left(\cC(T,L,\sigma,\eps)\sqrt T\left\|\sech\left(\frac{x}{2\eps}\right)\varphi_{\sigma,1}^\eps\right\|_{L^2(-L,L)}\right).\label{maj3}
    \end{align}
    Finally, for the first term, we only use the known roots of the form $4i\eps\lambda_{\sigma,k}^\eps,$ for $k\ge 2$, which provides the upper bound
    \begin{align}
         \sum_{\ell\ge 1} \ln\left|\frac{4i\eps\lambda_{\sigma,1}^\eps - a_\ell^\eps}{4i\eps\lambda_{\sigma,1}^\eps -\overline{a_\ell^\eps}}\right|\notag &\le \sum_{k\ge 2}\ln\left(\frac{\lambda_{\sigma,k}^\eps-\lambda_{\sigma,1}^\eps}{\lambda_{\sigma,k}^\eps+\lambda_{\sigma,1}^\eps}\right).
    \end{align}
    With the distribution of the eigenvalues \eqref{dist} and \eqref{gap}, this becomes \begin{align}
        \sum_{k\ge 2}\ln\left(\frac{\lambda_{\sigma,k}^\eps-\lambda_{\sigma,1}^\eps}{\lambda_{\sigma,k}^\eps+\lambda_{\sigma,1}^\eps}\right)&\le \sum_{k\ge 2} \ln\left(\frac{((k+1)^2-1)\frac{\eps\pi^2}{4L^2}}{\frac1{2\eps}+ (k^2+1)\frac{\eps\pi^2}{4L^2}}\right)\notag\\&\le \sum_{k\ge 2} \left(\frac{(k^2-1)\frac{\eps\pi^2}{4L^2}}{\frac1{2\eps}+(k^2+1)\frac{\eps\pi^2}{4L^2}}\right) - \ln\left(\frac{3\eps\pi^2}{4L^2}\right)\notag.
    \end{align}
    Comparing the series with an integral and integrating by parts then yields the upper bound \begin{equation}\label{maj1}
        \sum_{\ell\ge 1} \ln\left|\frac{4i\eps\lambda_{\sigma,1}^\eps - a_\ell^\eps}{4i\eps\lambda_{\sigma,1}^\eps -\overline{a_\ell^\eps}}\right|\le -\frac{\sqrt2L}\eps +2\ln\left(1+\frac{2L^2}{\pi^2\eps^2}\right)+\ln\left(\frac{4L^2}{3\eps\pi^2}\right)+2.
    \end{equation}
    Replacing $g^\eps(4i\eps\lambda_{\sigma,1}^\eps)$ with its expression and combining the bounds \eqref{maj2}, \eqref{maj3} and \eqref{maj1} then gives
    \begin{gather*}
        -\frac{\lambda_{\sigma,1}^\eps T}{2} \ln\left(\frac{\|\varphi_{\sigma,1}^\eps\|_{L^2(-L,L)}^2}{\eps(\psi_{\sigma,1}^\eps)'(-L)}\right)\le -\frac{\sqrt2L}\eps +2\ln\left(1+\frac{2L^2}{\pi^2\eps^2}\right)
        \\ \quad +\ln\left(\frac{4L^2}{3\eps\pi^2}\right)+2 + \frac{\lambda_{\sigma,1} T}{2} + \ln\left(\cC(T,L,\sigma,\eps)\sqrt T\left\|\sech\left(\frac{x-\sigma}{2\eps}\right)\varphi_{\sigma,1}^\eps\right\|_{L^2(-L,L)}\right)
    \end{gather*}
    Rearranging the terms gives the lower bound on the null-controllability cost \begin{equation}
        \cC(T,L,\sigma,\eps)\ge e^2\cdot\frac{4L^2}{3\eps\pi^2}\left(1+\frac{2L^2}{\pi^2\eps^2}\right)^2 \frac{\|\varphi_{\sigma,1}^\eps\|_{L^2(-L,L)}^2}{\eps(\psi_{\sigma,1}^\eps)'(-L) \left\|\sech\left(\frac{x-\sigma}{2\eps}\right)\varphi_{\sigma,1}^\eps\right\|_{L^2(-L,L)}}\exp\left(-\lambda_{\sigma,1}^\eps T+\frac{\sqrt2L}{\eps}\right).\label{fin-1}
    \end{equation}
    Finally, we use the expression of the eigenfunction $\varphi_{\sigma,1}^\eps$ given by \eqref{defphik}, which implies the existence of a constant $c>0$ such that \begin{equation}\frac{\|\varphi_{\sigma,1}^\eps\|_{L^2(-L,L)}^2}{\eps(\psi_{\sigma,1}^\eps)'(-L) \left\|\sech\left(\frac{x-\sigma}{2\eps}\right)\varphi_{\sigma,1}^\eps\right\|_{L^2(-L,L)}}\ge \begin{cases} c \exp\left(-\frac{L}{2\eps}\right), &\sigma\ge0\\c \exp\left(-\frac{L}{2\eps}-\frac{2|\sigma|}{\eps}\right),&\sigma<0\end{cases}.\end{equation}
    Plugging this estimate into the bound \eqref{fin-1} leads to \begin{equation}\left.\begin{aligned}
        \cC(T,L,\sigma,\eps)\ge ce^2\cdot\frac{4L^2}{3\eps\pi^2}\left(1+\frac{2L^2}{\pi^2\eps^2}\right)^2\exp\left(-\lambda_{\sigma,1}^\eps T+\frac{\sqrt2L}{\eps}-\frac L{2\eps}\right),\quad \text{if }\sigma\ge0,\\\cC(T,L,\sigma,\eps)\ge ce^2\cdot\frac{4L^2}{3\eps\pi^2}\left(1+\frac{2L^2}{\pi^2\eps^2}\right)^2\exp\left(-\lambda_{\sigma,1}^\eps T+\frac{\sqrt2L}{\eps}-\frac L{2\eps}-\frac{2|\sigma|}\eps\right),\quad \text{if }\sigma<0.\end{aligned}\right.
    \end{equation}Since $\lambda_{\sigma,1}^\eps\le \frac1{4\eps}+\eps\frac{\pi^2}{L^2}$, the null-controllability cost $\cC(T,L,\sigma,\eps)$ explodes as $\eps$ approaches $0$ if the quantity $-\frac T4 + \sqrt{2}L-\frac L2$ is positive if $\sigma\ge 0$ and if $-\frac T4 +\sqrt2L-\frac L2-\frac{2|\sigma|}\eps$ is positive if $\sigma<0$, namely whenever \begin{equation}
        T<\begin{cases} (4\sqrt2-2)L,& \sigma\ge 0\\
(4\sqrt2-2)L+8|\sigma|,& \sigma<0\end{cases}.
    \end{equation}
    The proof of the lower bound on $T_{\rm{unif}}$ is then concluded.

\section{The two-boundary control case}\label{sec ts}
\subsection{Setting and expected results}
Let $T>0$ be fixed. We consider the case $\sigma=0$ and wish to exploit the symmetry of the problem. By an abuse of notation, we drop the index $\sigma$ everywhere in the sequel.\\
We now consider weak solution of the same system, but in the case where one may act on the system from both endpoints on the interval. Namely, we study the control problem: \begin{equation}\label{cont2}\left\{\begin{aligned}
    \de_t \ue+\de_x(U^\eps \ue)&=\eps\de_x^2\ue,\quad &x\in (-L,L), t\in(0,T),\\
    \ue(t,-L)&=h_-^\eps(t),\quad &t\in(0,T),\\
    \ue(t,L)&=h_+^\eps(t),\quad &t\in (0,T),\\
    \ue(0,x)&=u_0(x),\quad &x\in(-L,L),
    \end{aligned}\right.
\end{equation}
The problem at stake remains the same. Given an initial condition $u_0\in L^2(-L,L)$, we want to build controls $h_-^\eps, h_+^\eps\in L^2(0,T)$ such that the solution of \eqref{cont2} verifies $u^\eps(T)\equiv0$. If we can always do this, we define the two-sided null-controllability cost as \[\cC_{TS}(T,L,\eps):=\sup_{u_0\in L^2, \|u_0\|_{L^2}=1} \inf \left\{\|(h_-^\eps,h_+^\eps)\|_{L^2(0,T)}: \ue(T)=0\right\}.\]
Again, the works of Fattorini and Russell, \cite{fattorini1974uniform},  show that the system \eqref{cont2} is null-controllable for any $T>0, L>0,\eps>0$, and we wish to find conditions on the control time $T$ so that the system is \textit{uniformly null-controllable}. As a reminder, it means we want to find constraints on $T,L$ such that \[\limsup_{\eps\to 0} \cC_{TS}(T,L,\eps)<+\infty.\]
Formally taking the limit as $\eps\to0$, we arrive at the control problem:
\begin{equation}\label{lim2}
    \left\{\begin{aligned}
    \de_t u-\de_x(\sgn(x)u)&=0,\\
    u(t,-L)&=h_-(t),\\
    u(t,L)&=h_+(t),\\
    u(0,x)&=u_0(x).
    \end{aligned}\right.
\end{equation}
The same analysis as in Section \ref{sec lim sys} gives that the system \eqref{lim2} is null-controllable if and only if $T>L$, and its control cost is given by \[\cC_{TS}(T,L,0)=\frac{\cC(T,L,0)}{\sqrt2}=\frac{\sqrt L}{\sqrt{T-L}},\] where $\cC(T,L,0)$ is the controllability cost of the limit system with only one boundary control, computed in Section \ref{sec lim sys}.\\

The null-control strategy is again to cancel out the mean of the solution in an arbitrarily small time, and then to set both controls to $0$ for a time interval of length $L$.\\

From the study of the limit system, it is a priori not obvious that controlling the system from both sides is easier than using only the left endpoint. However, morally speaking, when controlling from $x=-L$ , the transport term $U^\eps\de_x$ only helps to propagate the information from the left endpoint to the left half $[-L, 0]$. On the other half of the interval, the transport term goes towards the left, and we have to pay a price and use the viscous term to send the desired information to the right half $[0,L]$.\\

On the contrary, in the case of the two-sided control, the transport term is beneficial on the whole interval, as it propagates the data from the left endpoint to the left half of the interval, and the data from the right endpoint to the right half of the interval. We may therefore reasonably expect a better uniform null-controllability time in this setting.

\subsection{Statement of the result}

\begin{thm}\label{thm1-ts}
    There exists a minimal time $T_{\rm{unif}}^{\rm{TS}} < +\infty$ such that the system \eqref{cont2} is uniformly null-controllable for any time $T>T_{\rm{unif}}^{\rm{TS}}$. Moreover, this minimal time verifies \begin{equation}
        \label{tunif ts} T_{\rm{unif}}^{\rm{TS}}\in\left[1, 2\sqrt3\right]L.
    \end{equation}
\end{thm}

\begin{remark}
    There are two important features of this result to note, as a comparison to the control from one endpoint. First, the upper bound on the uniform controllability time is halved from $4\sqrt3L$ to $2\sqrt3L$, which is not obvious a priori. It now matches the upper bound obtained in \cite{lissy2012link} for an interval of width $L$. On top of that, the nontrivial lower bound on $T_{\rm{unif}}$ we had in Theorem \ref{thm1} is no longer present in this setting.
\end{remark}
\begin{remark}
    The case $\delta\ne0$ can also be treated, however it does not present the same symmetry properties which allow us to significantly improve the uniform controllability time. Without those properties, our approach would consist in only using the control on the "good" enpoint at which the derivatives of the eigenfunctions are larger, namely at $x=L$ if $\delta<0$ and $x=-L$ if $\delta>0$.\\
    With this method, we can improve Theorem \ref{thm1} to \[T_{\rm{unif}}^{\rm{TS}}<4\sqrt3L,\]
    for all $\delta\ne0$. It is currently unclear whether it is possible to use both controls to improve this upper bound.
\end{remark}

The proof of this result, done below in Section \ref{sec proof burg ts}, is performed through a constructive approach, and we therefore have a secondary result providing information on some admissible control functions $h_-^\eps, h_+^\eps$.

\begin{thm}\label{thm2 ts}
    Let $L>0, T>T^*=2\sqrt3L$. Then, there exists $C>0$ such that, for any $u_0\in L^2(-L,L),$ $\eps>0$, there exist control functions $h_-^{u_0,\eps}, h_+^{u_0,\eps}\in L^2(0,T)$ such that the solution $\ue$ of \eqref{cont} satisfies $\ue(T)=0$ and \begin{equation} \label{major ts}
        \|(h_-^{u_0,\eps}, h_+^{u_0,\eps})\|_{L^2(0,T)}\le \left(\frac{2\sqrt{2L}}{\sqrt{T-T^*}}+Ce^{-\frac C\eps}\right)\|u_0\|_{L^2(-L,L)}.
    \end{equation}
    As $\eps$ goes to $0$, these control functions converge in $L^2(0,T)$ towards the limit control \begin{equation}\label{7 ts}
        h_-^{u_0,0}(t)=h_+^{u_0,0}(t)=\begin{cases}
            -\frac1{T-T^*}\int_{-L}^Lu_0(x)\,\dd x,& t\le\frac{T-T^*}{2},\\
            0& t>\frac{T-T^*}2.
        \end{cases}
    \end{equation}
\end{thm}
\subsection{Scheme of the proof}\label{sec proof burg ts}
As hinted by the structure of the limit control, we again proceed in two steps. First, in an arbitrarily small time $\tau>0$, we build a simple control that kills the first eigenmode. More precisely, we have the following lemma.

\begin{Lemma}\label{lem1-ts}
Let $T>0$,  $\tau \in (0,T)$. 
  For any $\eps>0$ and any initial datum $u_0\in L^2(-L,L)$, there is a control  $(h_-^{1,\eps}, h_+^{1,\eps})\in L^2(0,\tau)$ verifying: 
    \begin{align}
        \|(h_-^{1,\eps}, h_+^{1,\eps})\|_{L^2(0,\tau)}\le\frac{2\sqrt{L}}{\sqrt\tau} \|u_0\|_{L^2(-L,L)}, \label{cout1 ts}
    \end{align}
   such that the solution $\ue$ of \eqref{cont2} satisfies
   at time $\tau$, 
   for any $k \in \N$,
    \begin{equation}\label{ytau-ts}
        \langle \ue(\tau),\psi_k^\eps\rangle=\begin{cases}e^{-\lambda_k^\eps\tau}\langle u_0,\psi_k^\eps\rangle - \frac{(\psi_k^\eps)'(-L)}{(\psi_0^\eps)'(-L)}\cdot\frac{e^{-\lambda_0^\eps\tau}\langle u_0,\psi_0^\eps\rangle}{\tau} \int_0^\tau e^{-(\lambda_k^\eps-\lambda_0^\eps)t}\,\dd t,&\text{ for }k\text{ even,}\\e^{-\lambda_k^\eps\tau}\langle u_0,\psi_k^\eps\rangle,&\text{ for }k\text{ odd.}
        \end{cases}
    \end{equation}\\
    In particular, for $k=0$, 
    \begin{equation}   \langle \ue(\tau),\psi_0^\eps\rangle=0.\label{mode1-ts}
        \end{equation}
\end{Lemma}

Then, starting from that state $\ue(\tau)$, we may steer the solution to $0$ with a small cost provided the remaining time $T-\tau$ is long enough:

\begin{Lemma}\label{lem2-ts}
Let $T>0$,  $\tau \in (0,T)$ with 
\begin{equation} \label{ratio-ts}
  T-\tau>T^*=2\sqrt{3}L.  
\end{equation}
Then there exists $C>0$ such that for any $u_0\in L^2(-L,L)$, for any $\eps>0$ , 
there exists a control $(h_-^{2,\eps}, h_+^{2,\eps})\in L^2(\tau,T)$ with 
    \begin{equation}\label{cout2-ts}\|(h_-^{2,\eps}, h_+^{2,\eps})\|_{L^2(\tau,T)}\le Ce^{-\frac C\eps}\left\|u_0\right\|_{L^2(-L,L)},\end{equation}
    such that 
    the solution $\ue$ of \eqref{cont2} on the time interval $(\tau,T)$ starting at time $\tau $ with the initial data $\ue(\tau)$ given by the formula  \eqref{ytau-ts},  for  $k \in \N$,
    satisfies 
    $\ue(T)=0$.
\end{Lemma}
We assume for the moment that these two Lemmas are true, and we shall prove them in the sections below.\\
Let $u_0\in L^2(-L,L)$, $\eps>0$, and $T>T^*=2\sqrt3L$. We choose $\tau=\frac{T-T^*}{2}$, so that \[T-\tau=\frac{T+T^*}2>T^*.\]
We may now apply Lemmas \ref{lem1-ts} and \ref{lem2-ts} and consider the two resulting controls $(h^{1,\eps}_-,h^{1,\eps}_+)\in L^2(0,\tau)$, $(h^{2,\eps}_-,h^{2,\eps}_+)\in L^2(\tau,T)$.\\
We construct the control as \begin{equation}
    (h^\eps_-,h^\eps_+)(t) =\begin{cases}
        (h_-^{1,\eps}, h_+^{1,\eps})(t)&\text{if } 0 < t<\tau,\\
    (h_-^{2,\eps}, h_+^{2,\eps})(t)&\text{if }\tau <  t < T.
    \end{cases}
\end{equation}
This choice of control steers the solution of \eqref{cont2} from $u_0$ to $\ue(T)=0$.\\
We may also obtain direct estimates on the norm of the control as follows:
\begin{align}
    \notag\|(h_-^\eps,h_+^\eps)\|_{L^2(0,T)}&\le \|(h_-^{1,\eps},h_+^{1,\eps})\|_{L^2(0,\tau)}+\|(h_-^{2,\eps}, h_+^{2,\eps})\|_{L^2(\tau,T)}\\
    &\le \left(\frac{2\sqrt{2L}}{\sqrt{T-T^*}}+Ce^{-\frac C\eps}\right)\left\|u_0\right\|_{L^2(-L,L)}\label{48 ts}.
\end{align}
This proves the estimate \eqref{major ts}. Moreover, it proves that the system \eqref{cont2} is therefore uniformly null-controllable for $T>T^*$, which proves the upper bound $T_{\rm{unif}}\le 2\sqrt3L$.

\subsection{Action of the control on the eigenmodes}
To describe the impact of the control on each eigenmode of the solution, we begin with a preliminary lemma exploiting duality equalities.
\begin{Lemma}\label{lemdual2}
    Let $\eps>0$, $t_1, t_2\in[0,T]$, and $k\ge0$. The evolution of the $k$-th mode of the solution $\ue$ of the control problem \eqref{cont2} is prescribed by:\begin{itemize}
        \item If $k$ is even, \begin{equation}\label{dual even}
            \langle \ue(t_2),\psi_k^\eps\rangle=e^{-\lambda_k^\eps(t_2-t_1)}\langle \ue(t_1),\psi_k^\eps\rangle+\eps(\psi_k^\eps)'(-L)\int_{t_1}^{t_2}e^{-\lambda_k^\eps (t-t_1)}(h_-+h_+)(t_1+t_2-t)\,\dd t.
        \end{equation}
        \item If $k$ is odd, \begin{equation}\label{dual odd}
            \langle \ue(t_2),\psi_k^\eps\rangle=e^{-\lambda_k^\eps(t_2-t_1)}\langle \ue(t_1),\psi_k^\eps\rangle+\eps(\psi_k^\eps)'(-L)\int_{t_1}^{t_2}e^{-\lambda_k^\eps (t-t_1)}(h_--h_+)(t_1+t_2-t)\,\dd t.
        \end{equation}
    \end{itemize}
\end{Lemma}
\begin{proof}
    Let $k\in\N$. We proceed as in the proof of Lemma \ref{lemdual} in the case of one boundary control. Two boundary terms now arise in the duality relation, which reads: \begin{align}
        \langle \ue(t_2),\psi_k^\eps\rangle=&e^{-\lambda_k^\eps(t_2-t_1)}\langle \ue(t_1),\psi_k^\eps\rangle +\notag \\ &\eps\int_{t_1}{t_2} e^{-\lambda_k^\eps(t-t_1)}\left(h_-(t_1+t_2-t) (\psi_k^\eps)'(-L) - h_+(t_1+t_2-t) (\psi_k^\eps)'(L)\right)\,\dd t.
    \end{align}
    To conclude, we simply notice that the eigenfunction $\psi_k^\eps$ is even if $k$ is even, and odd if $k$ is odd, as seen in the definitions of the eigenfunctions \eqref{defphi0} and \eqref{defphik}. We therefore have $(\psi_k^\eps)'(L)=(-1)^{k+1} (\psi_k^\eps)'(-L)$ and the claim is proven.
\end{proof}

Note that the action of the control on the odd and even eigenmodes are completely independent! The even component of the control only acts on the even eigenmodes, and the odd component on the odd eigenmodes. This will allow us to treat this null-controllability problem as two easier independent moment problems.
Since each moment problem has more relaxed constraints than the one in the previous sections, we expect a significantly lesser controllability cost.\begin{remark}
    The symmetry of the terms involved in the operator yields this even or odd relation between the derivatives of the eigenfunctions at both endpoints. In the shifted case $\sigma\ne0$, this does no longer hold. There is no simple relation between $(\psi_{\sigma,k}^\eps)'(-L)$ and $(\psi_{\sigma,k}^\eps)'(L)$, and it is unclear whether it is possible to combine both endpoints in order to obtain simpler decoupled moment problems.
\end{remark}

\subsection{Step 1: Killing the metastability, proof of Lemma \ref{lem1-ts}}

Let $u_0\in L^2(-L,L), \tau>0$. By Lemma \ref{lemdual2}, the action of the control on the first eigenmode is given by \[\langle\ue(\tau),\psi_0^\eps\rangle = e^{-\lambda_0^\eps\tau} \langle u_0,\psi_0^\eps\rangle +\eps(\psi_0^\eps)'(-L)\int_0^\tau (h^\eps_-(t)+h^\eps_+(t)) e^{-\lambda_0^\eps(\tau-t)}\,\dd t.\]
We may simply choose \begin{equation}\label{meta-ts}
    h_-^\eps(t)=h_+^\eps(t)=-\frac{e^{-\lambda_0^\eps t} \langle u_0,\psi_0^\eps\rangle}{2\tau \eps(\psi_0^\eps)'(-L)},
\end{equation}
which directly yields $\langle u^\eps(\tau),\psi_0^\eps\rangle=0.$\\Moreover, it holds \[\|(h_-^\eps,h_+^\eps)\|_{L^2(0,\tau)} \le \frac{\sqrt2 \|u_0\|_{L^2(-L,L)} \|\psi_0^\eps\|_{L^2(-L,L)}}{2\tau \eps |(\psi_0^\eps)'(-L)|}.\]
Using the estimates on the first eigenfunction \eqref{0}, it follows
\begin{equation}
    \|(h_-^\eps,h_+^\eps)\|_{L^2(0,\tau)}\le \frac{2\sqrt{L}}{\tau} \|u_0\|_{L^2(-L,L)}.
\end{equation}
We may also keep track of the impact of this control on the other eigenmodes by using again Lemma \ref{lemdual2}.\\
For $k\in \N$ odd, since we chose the control such that $h_-^\eps(t)-h_+^\eps(t)=0$, relation \eqref{dual odd} immediately gives \begin{equation}
    \langle \ue(\tau),\psi_k^\eps\rangle=e^{-\lambda_k^\eps\tau}\langle u_0,\psi_k^\eps\rangle.
\end{equation}
For $k\in \N$ even, we instead directly plug the expression of the control \eqref{meta-ts} into the duality equality \eqref{dual even}, and it leads to the desired relation \eqref{ytau-ts}. Lemma \ref{lem1-ts} is now fully proven.

\subsection{Step 2: Controlling the rest to zero, proof of Lemma \ref{lem2-ts}}
Let $T>0$, and $\tau\in (0,T)$ such that \begin{equation}\label{ratio2-ts}T-\tau>T^*=2\sqrt3L.\end{equation}
Let $u_0\in L^2(-L,L)$, and $\ue(\tau)$ the intermediary state built in Lemma \ref{lem1-ts} and defined by \eqref{ytau-ts}.\\
For convenience, we introduce $\wh T:=T-\tau, \wt{\ue}(t)=\ue(t+\tau), \wt h_\pm^\eps(t):=\wt h_\pm^\eps(t+\tau).$
The control problem \eqref{cont2} can then be rewritten as
\begin{equation}\label{contrest}\left\{\begin{aligned}
    \de_t\wt\ue+\de_x(U^\eps\wt\ue)&=\eps\de_x^2\wt\ue,\\
    \wt\ue(t,-L)&=\wt h_-^\eps(t),\\
    \wt\ue(t,L)&=\wt h_+^\eps(t),\\
    \wt\ue(0,x)&=\ue(\tau),
\end{aligned}\right.\end{equation}and we wish to build controls $\wt h_\pm^\eps\in L^2(0,\wh T)$ such that $\wt\ue(\wh T)=0.$\\
Since the $(\psi_k^\eps)_{k\in\N}$ form a basis of $L^2(-L,L)$, this is equivalent to reaching $\langle \wt\ue(\wh T),\psi_k^\eps\rangle=0$ for all $k\in\N$. Therefore, Lemma \ref{lemdual2} allows us to rewrite the null-controllability issue as a moment problem. Namely, we want to find controls $\wt h_\pm\in L^2(0,\wh T)$ such that:\begin{itemize}
    \item For all $k\in\N$ even, \begin{equation}\label{momeven}
        \int_0^{\wh T} (\wt h_-^\eps(t)+\wt h_+^\eps(t))e^{\lambda_k^\eps t}\,\dd t=-\frac{\langle \ue(\tau), \psi_k^\eps\rangle}{\eps(\psi_k^\eps)'(-L))}.
    \end{equation}
    \item For all $k\in\N$ odd, \begin{equation}\label{momodd}
        \int_0^{\wh T} (\wt h_-^\eps(t)-\wt h_+^\eps(t))e^{\lambda_k^\eps t}\,\dd t=-\frac{\langle \ue(\tau), \psi_k^\eps\rangle}{\eps(\psi_k^\eps)'(-L))}.
    \end{equation}
\end{itemize}
As opposed to Section \ref{sec-upper} where we had one moment problem on the control function $\wt h^\eps$, we now have two completely independent ones, for $\wt h_-^\eps+\wt h_+^\eps$ and for $\wt h_-^\eps-\wt h_+^\eps$. Building a full bi-orthogonal family to the family of exponentials $(e^{\lambda_k^\eps t})_{k\in\N}$ is no longer necessary, and we can instead use two "half" bi-orthogonal families. This is the object of the following lemma.

\begin{Lemma}
    Let $\wh T>0, \eps>0$. There exists a family $(q_k^\eps)_{k\ge1}\subset L^2(0,\wh T)$ such that:
    \begin{align}
        \text{For all }j\ge 0,k\ge 1,&\quad \int_0^{\wh T} e^{\lambda_{2j}^\eps t}q_{2k}^\eps(t)\,\dd t=\delta_{j,k},\\
        \text{For all }j\ge 0, k\ge 0, &\quad \int_0^{\wh T} e^{2\lambda_{2j+1}^\eps t} q_{2k+1}^\eps(t)\,\dd t=\delta_{j,k}.
    \end{align}
    Moreover, for every $\kappa>1$, there exists such a family and a constant $c=c(\eps,\kappa,L,\wh T)$ which is a rational function of its arguments such that, for all $k\ge 1$, \begin{equation} \label{bior-ts}
        \|q_k^\eps\|_{L^2(0,\wh T)}\le \frac{c}{\lambda_k^\eps-\frac1{4\eps}} \exp\left(-\frac{\wh T}{8\eps}+\frac{3\kappa L^2}{2\eps \wh T}\right).
    \end{equation}
\end{Lemma}
\begin{proof}
    We apply the Theorem \ref{thm bior}, proven in Section \ref{app bior}, twice.\\
    For the "even" bi-orthogonal family, we note that the family $(\lambda_{2k}^\eps)_{k\ge 0}$ verifies the assumptions from Section \ref{app bior}, with \[C_1=\frac1{4\eps}, C_2=\frac{\pi^2\eps}{L^2}.\]
    For the "odd" bi-orthogonal family, we do not need to worry about not re-exciting the first eigenmode, and can apply Theorem \ref{thm bior} with any choice of $\lambda_0$, with the same choice of constants \[C_1=\frac1{4\eps}, C_2=\frac{\pi^2\eps}{L^2}.\]
    This directly provides us the bound: \begin{equation}
        \forall k\ge 1, \|q_k^\eps\|_{L^2(0,\wh T)}\le \frac{c}{\lambda_k^{\eps}-\frac1{4\eps}} \exp\left(-\frac{\wh T}{8\eps} + \frac{3\kappa L^2}{2\eps \wh T}\right).
    \end{equation}
    which results in the estimate \eqref{bior-ts}.
\end{proof}
Let $\kappa>1$ to be determined later.
We then construct our controls $\wt h_\pm$ in the following way: \begin{align}
    \wt h_-^\eps(t)+\wt h_+^\eps(t)=&-\sum_{k\ge 1} c^\eps_{2k}q_{2k}^\eps(t),\\
    \wt h_-^\eps(t)-\wt h_+^\eps(t)=&-\sum_{k\ge0} c^\eps_{2k+1} q_{2k+1}^\eps(t),
\end{align}
where the coefficients $c_k^\eps$ are defined  by \[c_k^\eps:=\frac{\langle \ue(\tau),\psi_k^\eps\rangle}{\eps(\psi_k^\eps)'(-L)}.\]
That is, we choose the controls \begin{equation}
    \wt h_-^\eps(t)=-\frac12\sum_{k\ge 1}c_k^\eps q_k^\eps(t),\quad \wt h_+^\eps(t)=-\frac12\sum_{k\ge 1} (-1)^kc_k^\eps q_k^\eps(t).
\end{equation}
The moment constraints \eqref{momeven} and \eqref{momodd} are then trivially verified, so these controls, provided they are well-defined and are in $L^2(0,\wh T)$, steer the solution of \eqref{contrest} from $\ue(\tau)$ to $\wt\ue(\wh T)=0$.\\
To obtain suitable $L^2$ estimates on the control, we begin by providing upper bounds on the coefficients $c_k^\eps$.\begin{itemize}
    \item For $k\ge 1$ odd, the relation \eqref{ytau-ts} allows us to rewrite the coefficient as \[c_k^\eps=\frac{\langle\ue(\tau),\psi_k^\eps\rangle}{\eps(\psi_k^\eps)'(-L)}=e^{-\lambda_k^\eps\tau} \frac{\langle u_0,\psi_k^\eps\rangle}{\eps (\psi_k^\eps)'(-L)}.\]
    Applying Cauchy-Schwarz' inequality then directly yields \[
        |c_k^\eps|\le \frac{\|\psi_k^\eps\|_{L^2(-L,L)}}{|\eps(\psi_k^\eps)'(-L)|}\|u_0\|_{L^2(-L,L)}.
    \]
    Using the estimate on the eigenfunctions \eqref{k}, we conclude \begin{equation}
        |c_k^\eps|\le \frac{4L}{k\pi\sqrt\eps}\|u_0\|_{L^2(-L,L)} \label{coef odd}.
    \end{equation}
    \item For $k\ge 1$ even, we again use the relation \eqref{ytau-ts} and the coefficient reads \[c_k^\eps=\frac{\langle\ue(\tau),\psi_k^\eps\rangle}{\eps(\psi_k^\eps)'(-L)}=e^{-\lambda_k^\eps\tau} \frac{\langle u_0,\psi_k^\eps\rangle}{\eps (\psi_k^\eps)'(-L)}+e^{-\lambda_0^\eps\tau} \frac{\langle u_0,\psi_0^\eps\rangle}{\eps(\psi_0^\eps)'(-L)}\cdot\frac{\int_0^\tau e^{-(\lambda_k^\eps-\lambda_0^\eps)t}\,\dd t}{\tau}.\]
    Applying Cauchy-Schwarz' inequality and using rough upper bounds gives \[|c_k^\eps|\le \left(\frac{\|\psi_k^\eps\|_{L^2(-L,L)}}{|\eps(\psi_k^\eps)'(-L)|}+\frac{\|\psi_0^\eps\|_{L^2(-L,L)}}{|\eps(\psi_0^\eps)'(-L)|}\right) \|u_0\|_{L^2(-L,L)}.\]
    Again, the estimates on the eigenfunctions \eqref{k} as well as \eqref{0} allow us to conclude:
    \begin{equation}
        |c_k^\eps|\le \left(\frac{4L}{k\pi\sqrt\eps} + 2\sqrt{2L}\right)\|u_0\|_{L^2(-L,L)}. \label{coef even}
    \end{equation}
\end{itemize}
Finally, we may estimate the $L^2$ norm of the control. Namely, we have: \begin{equation}
    \|(\wt h_-^\eps, \wt h_+^\eps)\|_{L^2(0,\wh T)}\le\frac{\sqrt2}2 \sum_{k\ge 1} |c_k^\eps| \|q_k^\eps\|_{L^2(0,\wh T)}.
\end{equation}
Using the bound on the biorthogonal family \eqref{bior-ts}, it follows \begin{equation}
    \|(\wt h_-^\eps, \wt h_+^\eps)\|_{L^2(0,\wh T)}\le c\exp\left(-\frac{\wh T}{8\eps}+\frac{3\kappa L^2}{2\eps \wh T}\right) \|u_0\|_{L^2(-L,L)} \sum_{k\ge 1} \frac{|c_k^\eps|}{(\lambda_k^\eps-\frac1{4\eps})\|u_0\|_{L^2(-L,L)}}.
\end{equation}
Combining estimates \eqref{coef odd}, \eqref{coef even} and the distribution of the eigenvalues \eqref{dist}, this sum converges and can be bounded by a constant $c=c(\tau,\eps,L,\wh T)$ which is a rational fraction of its arguments, independent of the initial condition $u_0$.\\
Finally, it follows \begin{equation}
    \|(\wt h_-^\eps, \wt h_+^\eps)\|_{L^2(0,\wh T)}\le c\exp\left(-\frac{\wh T}{8\eps}+\frac{3\kappa L^2}{2\eps \wh T}\right) \|u_0\|_{L^2(-L,L)}.
\end{equation}
By the assumption \eqref{ratio2-ts}, $\wh T>2\sqrt3L$, so we can choose $\kappa>1$ such that $\wh T>2\sqrt{3\kappa}L$, and therefore \[-\frac{\wh T}{8}+\frac{3\kappa L^2}{2\wh T}<0.\]
This concludes the proof of Lemma \ref{lem2-ts}

\subsection{Absence of short-time obstruction}\label{sec-lissy2}
It is natural to wonder whether a non-trivial lower bound as the one obtained in Section \ref{sec-lissy} can be reached through a similar method in the context where the control may act on both endpoints.\\

We set $T>0, \eps>0$, and consider again the initial datum of the control system \eqref{cont2} given by $u_0^\eps=\sech\left(\frac{x}{2\eps}\right)\varphi_1^\eps$. Let $(h_-^\eps(t), h_+^\eps(t))$ the corresponding optimal control functions. It holds \begin{equation}\label{cost1 ts}
    \|(h_-^\eps, h_+^\eps)\|_{L^2(0,T)} \le \cC_{TS}(T,L,\eps) \left\|\sech\left(\frac{x}{2\eps}\right)\varphi_1^\eps\right\|_{L^2(-L,L)}.
\end{equation}
Applying \eqref{lemdual2}, the moment formulation of the null-controllability issue is as follows:
\begin{itemize}
    \item For $k\in\N$ even, \begin{equation}\label{obst even}
        \int_0^T (h_-^\eps+h_+^\eps)(t)e^{\lambda_k^\eps t}\,\dd t= -\frac{\left\langle\sech\left(\frac x{2\eps}\right)\varphi_1^\eps,\psi_k^\eps\right\rangle}{\eps(\psi_k^\eps)'(-L)}.
    \end{equation}
    \item For $k\in \N$ odd, \begin{equation} \label{obst odd}
        \int_0^T (h_-^\eps-h_+^\eps)(t)e^{\lambda_k^\eps t}\,\dd t= -\frac{\left\langle\sech\left(\frac x{2\eps}\right)\varphi_1^\eps,\psi_k^\eps\right\rangle}{\eps(\psi_k^\eps)'(-L)}.
    \end{equation}
\end{itemize}
From the definition of the eigenfunctions $(\varphi_k^\eps)$ and $(\psi_k^\eps)$, we have \[\left\langle\sech\left(\frac x{2\eps}\right)\varphi_1^\eps,\psi_k^\eps\right\rangle= \langle\varphi_1^\eps, \varphi_k^\eps\rangle.\]
Therefore, since the $(\varphi_k^\eps)$ form a Hilbert basis of $L^2(-L,L)$, it follows from \eqref{obst even} and \eqref{obst odd} that \begin{align}
    &\text{For $k$ even,}\quad \int_0^T (h_-^\eps+h_+^\eps)\label{obst even 2}(t)e^{\lambda_k^\eps t}\,\dd t=0.\\
    &\text{For $k$ odd,}\quad \int_0^T (h_-^\eps-h_+^\eps)\label{obst odd 2}(t)e^{\lambda_k^\eps t}\,\dd t= -\frac{\|\varphi_1^\eps\|^2_{L^2(-L,L)}}{\eps(\psi_1^\eps)'(-L)} \delta_{1,k}.
\end{align}
Note that the situation is already much different from the case of Section \ref{sec-lissy}, as there are no exploitable constraints on the even part of the control $h_-^\eps+h_+^\eps$, and the conditions on $h_-^\eps-h_+^\eps$ are similar, but only involve the odd eigenvalues.\\

In what follows, we do not make use of \eqref{obst even 2} to get a lower bound on the cost of the control, as those constraints can easily be met by choosing a skew-symmetric control such that $h_-^\eps=-h_+^\eps$. We therefore only focus on the symmetric part of the control.\\

We introduce the entire function \begin{equation}
    v(z):=\int_{-\frac T2}^{\frac T2} (h_-^\eps-h_+^\eps)\left(t+\frac T2\right) e^{-izt}\,\dd t.
\end{equation}
The conditions \eqref{obst odd 2} read, for $k\in\N$ odd,  \begin{equation}
    v(i\lambda_k^\eps)=\begin{cases}
        -e^{\frac{-\lambda_1^\eps T}{2}} \frac{\|\varphi_1^\eps\|^2}{\eps(\psi_1^\eps)'(-L)},& k=1\\
            0,& k\ne1.
    \end{cases}
\end{equation}
From the bound \eqref{cost1 ts}, we obtain:
\begin{equation}
    \|h_-^\eps-h_+^\eps\|_{L^2(0,T)}\le \sqrt2 \|(h_-^\eps, h_+^\eps)\|_{L^2(0,T)}\le \sqrt2 \cC_{TS}(T,L,\eps) \left\|\sech\left(\frac{x}{2\eps}\right)\varphi_1^\eps\right\|_{L^2(-L,L)},
\end{equation}
which allows us to directly derive the following estimate on $v(z),$ for $z\in\C$:
\begin{equation}
    |v(z)|\le \sqrt{2T} \cC_{TS}(T,L,\eps)\exp\left(\frac{T|Im(z)|}2\right) \left\|\sech\left(\frac{x}{2\eps}\right)\varphi_1^\eps\right\|_{L^2(-L,L)}.
\end{equation}
We again rescale this function by letting $f(z):=v\left(\frac z{4\eps}\right)$, and invoke the same representation theorem as in Section \ref{sec-lissy}:
\begin{equation}\label{koosis ts}
        \ln|f(z)|=\sum_{\ell\ge1}\ln\left|\frac{z-a_\ell}{z-\overline{a_\ell}}\right|+\sigma Im(z) + \frac{Im(z)}{\pi}\int_{\R}\frac{\ln|f(s)|}{|s-z|^2}\,\dd s,
    \end{equation}
where $\sigma$ is the type of the entire function $f$ and the $(a_\ell)_{\ell\ge 1}$ are the roots of $f$ in the upper half-plane.
Our goal is to estimate both sides of this inequality for $z=4i\eps\lambda_1^\eps$, and deduce a lower bound on the controllability cost from it.\\
For the left-hand side, the calculations are direct and we immediately have \begin{equation}\label{lhs ts}
    \ln|f(4i\eps\lambda_1^\eps)|=\ln|v(i\lambda_1^\eps)|=-\frac{\lambda_1^\eps T}{2} \ln\left(\frac{\|\varphi_1^\eps\|_{L^2(-L,L)}^2}{\eps(\psi_1^\eps)'(-L)}\right).
\end{equation}
For the right-hand side, the analysis is close to what was done with only one control. For the second and third term, we proceed exactly the same way and obtain \begin{align}
    \sigma \Im(4i\eps\lambda_1^\eps)&\le \frac{\lambda_1^\eps T}{2},\label{rhs 2 ts}\\
    \frac{Im(4i\eps\lambda_1^\eps)}{\pi}\int_\R \frac{\ln|f(s)|}{|s-4i\eps\lambda_1^\eps|^2}\,\dd s&\le \ln\left(\cC_{TS}(T,L,\eps)\sqrt{2T}\left\|\sech\left(\frac{x}{2\eps}\right)\varphi_1^\eps\right\|_{L^2(-L,L)}\right)\label{rhs 3 ts}.
\end{align}
For the first term of the right-hand side, the only zeros of $f$ in the upper half-plane that we may use are the $(4i\eps\lambda_{2k+1}^\eps)_{k\ge 1}.$ We are then led to study the sum \begin{equation}
    \sum_{\ell \ge 1}\ln\left|\frac{4i\eps\lambda_1^\eps - a_\ell}{4i\eps\lambda_1^\eps-\overline{a_\ell}}\right|\le \sum_{k\ge 1} \ln\left(\frac{\lambda_{2k+1}^\eps-\lambda_1^\eps}{\lambda_{2k+1}^\eps+\lambda_1^\eps}\right).
\end{equation}
Exploiting the distribution of the eigenvalues \eqref{dist} and \eqref{gap}, we have the upper bound \begin{equation}
    \sum_{k\ge 1} \ln\left(\frac{\lambda_{2k+1}^\eps-\lambda_1^\eps}{\lambda_{2k+1}^\eps+\lambda_1^\eps}\right) \le \sum_{k\ge 1} \ln\left(\frac{(4(k+1)^2-1)\frac{\eps\pi^2}{4L^2}}{\frac1{2\eps}+ ((2k+1)^2+1)\frac{\eps\pi^2}{4L^2}}\right).
\end{equation}
We may then use a series-integral comparison, and similarly to Section \ref{sec-lissy}, obtain an upper bound of the form \begin{equation}\label{rhs 1 ts}
    \sum_{\ell \ge 1}\ln\left|\frac{4i\eps\lambda_1^\eps - a_\ell}{4i\eps\lambda_1^\eps-\overline{a_\ell}}\right|\le -\frac{\sqrt2L}{2\eps} + \ln(c(L,\eps)),
\end{equation}
where $c(L,\eps)$ is a rational fraction of its arguments.\\
We may now combine all the estimates \eqref{lhs ts}, \eqref{rhs 2 ts}, \eqref{rhs 3 ts} and \eqref{rhs 1 ts} to obtain \begin{align}
    -\frac{\lambda_1^\eps T}{2} \ln\left(\frac{\|\varphi_1^\eps\|_{L^2(-L,L)}^2}{\eps(\psi_1^\eps)'(-L)}\right)\le -\frac{\sqrt2L}{2\eps} &+\notag \frac{\lambda_1^\eps T}2 +\ln(c(L,\eps)) +\\&+ \ln\left(\cC_{TS}(T,L,\eps)\sqrt{2T}\left\|\sech\left(\frac{x}{2\eps}\right)\varphi_1^\eps\right\|_{L^2(-L,L)}\right).
\end{align}
Rearranging the terms to isolate $\cC_{TS}(T,L,\eps)$ then gives \begin{equation}
    \cC_{TS}(T,L,\eps)\ge \frac{c(L,\eps)}{\sqrt{2T}}\cdot\frac{\|\varphi_1^\eps\|_{L^2(-L,L)}^2}{\eps(\psi_1^\eps)'(-L) \left\|\sech\left(\frac{x}{2\eps}\right)\varphi_1^\eps\right\|_{L^2(-L,L)}}\exp\left(-\lambda_1^\eps T+\frac{\sqrt2L}{2\eps}\right).\label{fin ts}
\end{equation}
To reach a conclusion, we use the definition of $\varphi_1^\eps$ in \eqref{defphik} as well as properties of the eigenfunction $\psi_1^\eps$, and it follows that there exists $c(L,\eps)$ a rational function of its arguments such that\begin{equation}
    \frac{\|\varphi_1^\eps\|_{L^2(-L,L)}^2}{\eps(\psi_1^\eps)'(-L) \left\|\sech\left(\frac{x}{2\eps}\right)\varphi_1^\eps\right\|_{L^2(-L,L)}}\ge c(L,\eps) \exp\left(-\frac{L}{2\eps}\right).
\end{equation}
Plugging this inequality into \eqref{fin ts} leads to \begin{equation}
    \cC_{TS}(T,L,\eps)\ge \frac{c(L,\eps)}{\sqrt T} \exp\left(-\lambda_1^\eps T + \frac{\sqrt2L}{2\eps} - \frac{L}{2\eps}\right).
\end{equation}
Since $\lambda_1^\eps\le\frac1{4\eps}+\frac{\pi^2\eps}{4L^2}$, the exponential explodes whenever \begin{equation}-\frac T4 + \frac{\sqrt{2}-1}{2}L>0,\end{equation} and therefore we may not have uniform controllability for $T<(2\sqrt2-2)L.$\\
However, $2\sqrt2-2\simeq0.83 <1$, so this analysis does not improve the necessary lower bound $T>L$ for uniform controllability that we obtained through the study of the limit system.\\

One may wonder whether a short-time obstruction can still be achieved by choosing a different initial condition that imposes constraints on the symmetric part of the control as well, for instance taking \[u^\eps_0=\sech\left(\frac{x}{2\eps}\right) (\varphi_1^\eps+\varphi_2^\eps).\]
Still, the fact that the moment constraints on the even modes and the odd modes are completely decoupled makes it so that we can not reach a better bound through this approach. Performing the same analysis would simply lead on a lower bound with the same exponential term on $\|h_-^\eps-h_+^\eps\|_{L^2(0,T)}$ and $\|h_-^\eps+h_+^\eps\|_{L^2(0,T)}$.

\appendix

\section{Construction of the bi-orthogonal families}\label{app bior}

In this appendix we provide a construction of a bi-orthogonal family, following the one proposed by Tenenbaum and Tucsnak in \cite{tenenbaum2007new}, and adapting it in order to incorporate the dissipation term, while making sure to not re-excite the first eigenmode.\newline\newline
\textbf{Spectral assumptions.}\\
Let $C_1,C_2,r>0$. Let $(\lambda_k)_{k\ge 0}$ a family of distinct real numbers such that:\begin{itemize}
    \item The sub-family $(\lambda_k)_{k\ge 1}$ follows the repartition:
    \begin{equation}\label{cond lambd1}
        \forall k\ge1, \quad |\lambda_k-(C_1+C_2k^2)|\le rk.
    \end{equation}
    We additionally assume that \begin{equation}
        \forall k\ge1, \quad \lambda_k> C_1\label{cond lambd2}.
    \end{equation}
    \item The first value $\lambda_0$ is isolated from the rest of the family, in the sense that it verifies \begin{equation}
        \lambda_0<C_1.\label{cond lambd3}
    \end{equation}
\end{itemize}
For $T>0$, we then wish to build a bi-orthogonal family $(q_j)_{j\ge 1}\subset L^2(0,T)$ verifying\begin{align}\label{cond bior}
    \forall j\ge 1, \forall k\ge 0, \quad \int_{0}^{T}q_j(t)e^{\lambda_kt}\,\dd t=\delta_{j,k}.
\end{align}
Note that all those functions must in particular be orthogonal to $t\mapsto e^{\lambda_0 t}$ in $L^2(0,T)$.\\
Under the spectral assumptions \eqref{cond lambd1}, \eqref{cond lambd2} and \eqref{cond lambd3}, we obtain the following result.

\begin{thm}\label{thm bior}
    For any $\kappa>1$, there exists $c>0$ and a family $(q_j)_{j\ge 0}\subset L^2\left(0,T\right)$ satisfying the constraint \eqref{cond bior} as well as the estimates:\begin{equation}\label{maj q}
        \forall j\ge 1, \quad \|q_j\|_{L^2\left(0,T\right)} \le c \left(\lambda_j-C_1\right)^{-1}\exp\left(-\frac{C_1T}{2} + \frac{3\pi^2\kappa}{2C_2T}\right).
    \end{equation}
\end{thm}

\begin{proof}
    We proceed as follows. We begin by absorbing the dissipation term $C_1$. Following the approach of Lissy in \cite{lissy2012link}, we look for such a family $(q_j)_{j\ge 0}$ of the form: \begin{equation}\label{bior q}
    \forall j\ge 1, \quad q_j(t)=\begin{cases}0, & t\in\left[0,\frac T2\right],\\
    e^{-C_1t}\exp\left(-\frac{3T}{4} (\lambda_j-C_1)\right) p_j\left(t-\frac{3T}{4}\right),& t\in\left[\frac T2,T\right],\end{cases}
\end{equation}
where the new family $(p_j)_{j\ge 0}\in L^2\left(-\frac T4,\frac T4\right)$ verifies, for $j\ge1, k\ge 0,$\begin{equation}\label{bior pi}
    \int_{-\frac T4}^{\frac T4} p_j(t)e^{(\lambda_k-C_1)t}\,\dd t=\delta_{j,k}.
\end{equation}
This condition is equivalent to the constraint \eqref{cond bior} on the functions $q_j$.\\
Define the family $(\mu_k)_{k\ge -n}$ of distinct real numbers :\begin{equation}
    \forall k\ge 0, \quad \mu_k:=\lambda_k-C_1.
\end{equation}
The assumptions \eqref{cond lambd1}, \eqref{cond lambd2} and \eqref{cond lambd3} imply:\begin{itemize}
    \item The $(\mu_k)_{k\ge1}$ form a family of distinct positive numbers such that:\begin{equation}\label{cond mu1}
        \forall k\ge 1, |\mu_k-C_2k^2|\le rk.
    \end{equation}
    \item The first value $\mu_0$ is isolated from the rest of the spectrum, as:\begin{equation}\label{cond mu2}
        \mu_0<0.
    \end{equation}
\end{itemize}
To build the family $(p_j)_{j\ge 0}$, we follow the same construction as Tenenbaum and Tucsnak in \cite{tenenbaum2007new}, with an adaptation to not re-excite the first eigenmode. 
\begin{Lemma}\label{lem tt}
    Let $(\mu_k)_{k\ge 0}$ verifying the assumptions above. For any $\kappa>1$, there exists $c>0$ and a family $(p_j)_{j\ge 1}$ satisfying the constraints \eqref{bior pi} as well as the estimates:\begin{equation}\label{maj p}
        \forall j\ge 1, \quad \|p_j\|_{L^2\left(-\frac T4, \frac T4\right)}\le c \mu_j^{-1} \exp\left(\frac{3\pi^2\kappa}{2C_2T}\right).
    \end{equation}
\end{Lemma}
We emphasize that, despite having the extra condition that the functions $p_j$ must be orthogonal to $t\mapsto e^{\mu_0 t}$ in $L^2\left(-\frac T4,\frac T4\right),$ we retrieve the exact same exponential term as \cite{tenenbaum2007new} in the upper bound of our bi-orthogonal family.
\begin{proof}[Proof of Lemma \ref{lem tt}]
    Without loss of generality, we assume that $C_2=1$. We may always go back to this setting by performing a linear change of variable.\newline\newline
    The standard method for the construction of such families, as done in \cites{glass2010complex, tenenbaum2007new,fattorini1974uniform}, is to build the bi-orthogonal functions as inverse Fourier transforms of entire functions that vanish at all the $(i\mu_k)_{k\ge 1}$ but one. To apply the Paley-Wiener theorem and recover suitable estimates on the functions, we need to construct such entire functions that are of exponential type, and their restriction to the real axis lies in $L^2$.\newline\newline
    Let $\Phi$ the entire function defined by
    \begin{equation}
        \Phi(z):=\prod_{k\ge 1}\left(1+i\frac z{\mu_k}\right).
    \end{equation}
    From the assumption \eqref{cond mu1}, this function is well defined and of exponential type. We may then define a family of entire functions $(\Phi_k)_{k\ge 1}$ as follows:\begin{equation}
        \Phi_k(z):=\frac{\Phi(z)}{\Phi'(i\mu_k) (z-i\mu_k)}.
    \end{equation}
    Using the repartition of the $(\mu_k)_{k\ge1}$ given by \eqref{cond mu1}, we obtain the following estimates on $\Phi_k$. There $B\ge 0, c>0$ two constants independent of $k$ such that:\begin{align}\label{phi C}
        \forall k\ge 1,\forall z\in\C,\quad |\Phi_k(z)|&\le c (1+|z|)^B \exp\left(\pi\sqrt{|z|}\right),\\\label{phi R}
        \forall k\ge 1,\forall x\in\R, \quad |\Phi_k(x)|&\le c (\mu_k+|x|)^B \exp\left(\pi\sqrt{|x|/2}\right).
    \end{align}
    Moreover, by construction, the entire functions $\Phi_k$ verify
    \begin{equation}
        \forall j\ge 1,k\ge1, \quad \Phi_j(i\mu_k)=\delta_{j,k}.
    \end{equation}
    We also want this constraint to hold for $k=0$, and therefore correct the functions $\Phi_j$ as follows. For $j\ge 1$, soit 
    \begin{equation}
        \wt{\Phi_j}(z):=\Phi_j(z)\cdot \frac{-iz-\mu_0}{\mu_j-\mu_0}.
    \end{equation}
    The entire functions $\wt{\Phi_j}$ then verify, as desired: \begin{equation}\label{crit phii}
        \forall j\ge 1,\forall k\ge 0, \quad \wt{\Phi_j}(i\mu_k)=\delta_{j,k}.
    \end{equation}
    Moreover, we corrected the entire functions by a polynomial factor, which does not affect their exponential type, and the term $(\mu_j-\mu_0)^{-1}$ is bounded uniformly in $j$ through the assumption \eqref{cond mu2}. The new entire functions $\wt{\Phi_j}$ therefore verify the same growth estimates \eqref{phi C} and \eqref{phi R}, up to replacing the constant $B$ by $B+1$.\\
    
    We now define the same multiplier function as in \cite{tenenbaum2007new}, which aims at compensating the growth of the $\wt{\Phi_j}$ on the real axis.\\
    For $\beta>0, \delta>0$, let \[\nu=\frac{(\pi+\delta)^2}{\beta},\] and define the entire function $H_\beta$:\begin{equation}
    H_\beta(z):=C_\nu \int_{-1}^1 \exp\left(-\frac{\nu}{1-t^2}-i\beta tz\right)\,\dd t,
\end{equation}
where $C_\nu$ is a normalization constant so that $H_\beta(0)=1$.
The entire function $H_\beta$ satisfies: \begin{align}
    |H_\beta(iy)|&\ge\frac1{11\sqrt{\nu+1}}\exp\left(\frac{\beta|y|}{2\sqrt{\nu+1}}\right),\quad &\forall y\in\R,\label{hbet imi}\\
    |H_\beta(z)|&\le e^{\beta|z|},\quad &\forall z\in\C,\label{croiss hbeti}\\
    |H_\beta(x)|&\le \sqrt{\nu+1}\exp\left(\frac{3\nu}4-(\pi+\frac\delta2)\sqrt{|x|}\right),\quad &\forall x\in\R.\label{hbet rei}
\end{align}
We choose the multiplier function \begin{equation*}
    f(z):=H_\beta\left(\frac z2\right).
\end{equation*}
We may then obtain the desired entire functions $J_j$:
\begin{equation}
    \forall j\ge 0, z\in\C, \quad J_j(z):= \wt{\Phi_j}(z) \cdot\frac{f(z)}{f(i\mu_j)}.
\end{equation}
By combining the upper bounds \eqref{phi R}, \eqref{hbet imi} and \eqref{hbet rei}, it follows: \begin{equation}
    |J_j(x)|\le c \exp\left(-\frac{\beta\mu_j}{4\sqrt{\nu+1}} + \frac{3\nu}{4} - \frac\delta{2\sqrt2}\sqrt{|x|}\right).
\end{equation}
We then choose $\beta\in\left(\frac {T}{2\kappa},\frac T2\right)$ and $\delta>0$ such that \[\nu<\frac{(4-\delta)\pi^2\kappa}{2T}.\] 
It immediately yields, for $j\ge 1$: \begin{equation}
    \|J_j\|_{L^2(\R)}\le c(\kappa) (\mu_j)^{-1} \exp\left(\frac{3\pi^2\kappa}{2T}\right).
\end{equation}
From the growth estimates \eqref{phi C} and \eqref{croiss hbeti}, we are in a suitable context to apply the Paley-Wiener theorem, and there exists a family $(p_j)_{j\ge0}$ in $L^2(-\frac T4,\frac T4)$, such that \begin{equation}
    J_j(z)=\int_{-\frac T4}^{\frac T4} p_j(t)e^{-itz}\,\dd t,
\end{equation}
as well as \[\|p_j\|_{L^2\left(-\frac T4,\frac T4\right)}\le c (\mu_j)^{-1}\exp\left(\frac{3\pi^2\kappa}{2T}\right).\]\\
Moreover, the relation \eqref{crit phii} implies that $J_j(i\mu_k)=\delta_{j,k}$ pour $j\ge1,k\ge0$, and the family $(p_j)_{j\ge_0}$ thus verifies the bi-orthogonality condition \eqref{bior pi}.\\
This concludes the proof \ref{lem tt}.
\end{proof}
Equipped with this result, we resume the proof of Theorem \ref{thm bior}. Let $\kappa>1$ and $(p_j)_{j\ge 0}\subset L^2\left(-\frac T4,\frac T4\right)$ a family given by \ref{lem tt}. We recall the definition \eqref{bior q} of the bi-orthogonal family $(q_j)_{j\ge 1}\subset L^2(0,T)$. Since $C_1$ is positive, we directly obtain, for all $j\ge 1$, the upper bound:\begin{equation}
    \|q_j\|_{L^2(0,T)} \le \exp\left(-C_1\frac T2\right) \|p_j\|_{L^2\left(-\frac T4,\frac T4\right)}.
\end{equation}
Finally we use the estimate \eqref{maj p} and, recalling that $\mu_k=\lambda_k-C_1$, it directly leads to the  desired upper bound \eqref{maj q}, which concludes the proof of Theorem \ref{thm bior}.
\end{proof}

    \bibliography{Ref}

@article{bardos1979first,
  title={First order quasilinear equations with boundary conditions},
  author={Bardos, Claude and LeRoux, Alain-Yves and N{\'e}d{\'e}lec, Jean-Claude},
  journal={Communications in partial differential equations},
  volume={4},
  number={9},
  pages={1017--1034},
  year={1979},
  publisher={Taylor \& Francis}
}

@article{beck2009using,
  title={Using global invariant manifolds to understand metastability in the Burgers equation with small viscosity},
  author={Beck, Margaret and Wayne, C Eugene},
  journal={SIAM Journal on Applied Dynamical Systems},
  volume={8},
  number={3},
  pages={1043--1065},
  year={2009},
  publisher={SIAM}
}

@article{carbou2010stability,
  title={Stability of static walls for a three-dimensional model of ferromagnetic material},
  author={Carbou, Gilles},
  journal={Journal de math{\'e}matiques pures et appliqu{\'e}es},
  volume={93},
  number={2},
  pages={183--203},
  year={2010},
  publisher={Elsevier}
}

@book {Coron07,
    AUTHOR = {Coron, J.-M.},
     TITLE = {Control and Nonlinearity},
    SERIES = {Mathematical Surveys and Monographs},
    VOLUME = {136},
 PUBLISHER = {American Mathematical Society, Providence, RI},
      YEAR = {2007}
}

@article{CG,
  title={Singular optimal control: a linear 1-D parabolic--hyperbolic example},
  author={Coron, J-M and Guerrero, Sergio},
  journal={Asymptotic Analysis},
  volume={44},
  number={3-4},
  pages={237--257},
  year={2005},
  publisher={IOS Press}
}

@article{darde2019cost,
  title={On the cost of observability in small times for the one-dimensional heat equation},
  author={Dard{\'e}, J{\'e}r{\'e}mi and Ervedoza, Sylvain},
  journal={Analysis \& PDE},
  volume={12},
  number={6},
  pages={1455--1488},
  year={2019},
  publisher={Mathematical Sciences Publishers}
}

@article{fattorini1974uniform,
  title={Uniform bounds on biorthogonal functions for real exponentials with an application to the control theory of parabolic equations},
  author={Fattorini, Hector O and Russell, David L},
  journal={Quarterly of Applied Mathematics},
  volume={32},
  number={1},
  pages={45--69},
  year={1974}
}

@article{folino2017metastability,
  title={Metastability for nonlinear convection--diffusion equations},
  author={Folino, Raffaele and Lattanzio, Corrado and Mascia, Corrado and Strani, Marta},
  journal={Nonlinear Differential Equations and Applications NoDEA},
  volume={24},
  number={4},
  pages={35},
  year={2017},
  publisher={Springer}
}

@article{glass2010complex,
  title={A complex-analytic approach to the problem of uniform controllability of a transport equation in the vanishing viscosity limit},
  author={Glass, Olivier},
  journal={Journal of Functional Analysis},
  volume={258},
  number={3},
  pages={852--868},
  year={2010},
  publisher={Elsevier}
}

@article{GG,
author = {Glass, O. and Guerrero, S.},
title = {On the Uniform Controllability of the Burgers Equation},
journal = {SIAM Journal on Control and Optimization},
volume = {46},
number = {4},
pages = {1211-1238},
year = {2007},
}

@article{guerrero2007singular,
  title={Singular optimal control for a transport-diffusion equation},
  author={Guerrero, Sergio and Lebeau, Gilles},
  journal={Communications in Partial Differential Equations},
  volume={32},
  number={12},
  pages={1813--1836},
  year={2007},
  publisher={Taylor \& Francis}
}

@article{hopf,
author = {Hopf, Eberhard},
title = {The partial differential equation $u_t + uu_x = \mu u_{xx}$},
journal = {Communications on Pure and Applied Mathematics},
volume = {3},
number = {3},
pages = {201-230},
year = {1950}
}

@article{il1960asymptotic,
  title={Asymptotic behavior of solutions of the Cauchy problem for some quasi-linear equations for large values of the time},
  author={Il'in, Arlen Mikhailovich and Oleinik, Olga Arsen'evna},
  journal={Matematicheskii Sbornik},
  volume={93},
  number={2},
  pages={191--216},
  year={1960},
  publisher={Russian Academy of Sciences, Steklov Mathematical Institute of Russian~…}
}

@article{kawashima1994stability,
  title={Stability of shock profiles in viscoelasticity with non-convex constitutive relations},
  author={Kawashima, Shuichi and Matsumura, Akitaka},
  journal={Communications on Pure and Applied Mathematics},
  volume={47},
  number={12},
  pages={1547--1569},
  year={1994},
  publisher={Wiley-Liss Inc.}
}

@book{koosis1988logarithmic,
  title={The logarithmic integral},
  author={Koosis, Paul},
  volume={1},
  year={1988},
  publisher={Cambridge university press}
}

@article{kreiss1986convergence,
  title={Convergence to steady state of solutions of Burgers' equation},
  author={Kreiss, Gunilla and Kreiss, Heinz-Otto},
  journal={Applied Numerical Mathematics},
  volume={2},
  number={3-5},
  pages={161--179},
  year={1986},
  publisher={Elsevier}
}

@article{laurent2016uniform,
  title={Uniform observability estimates for linear waves},
  author={Laurent, Camille and L{\'e}autaud, Matthieu},
  journal={ESAIM: Control, Optimisation and Calculus of Variations},
  volume={22},
  number={4},
  pages={1097--1136},
  year={2016},
  publisher={EDP Sciences}
}

@article{leautaudnonlin,
author = {L\'{e}autaud, Matthieu},
title = {Uniform Controllability of Scalar Conservation Laws in the Vanishing Viscosity Limit},
journal = {SIAM Journal on Control and Optimization},
volume = {50},
number = {3},
pages = {1661-1699},
year = {2012},
}

@article{laurent2023uniform,
  title={On uniform controllability of 1D transport equations in the vanishing viscosity limit},
  author={Laurent, Camille and L{\'e}autaud, Matthieu},
  journal={Comptes Rendus. Math{\'e}matique},
  volume={361},
  number={G1},
  pages={265--312},
  year={2023}
}

@article{lissy2012link,
  title={A link between the cost of fast controls for the 1-D heat equation and the uniform controllability of a 1-D transport-diffusion equation},
  author={Lissy, Pierre},
  journal={Comptes Rendus Mathematique},
  volume={350},
  number={11-12},
  pages={591--595},
  year={2012},
  publisher={Elsevier}
}

@article{lissy2015explicit,
  title={Explicit lower bounds for the cost of fast controls for some 1-D parabolic or dispersive equations, and a new lower bound concerning the uniform controllability of the 1-D transport--diffusion equation},
  author={Lissy, Pierre},
  journal={Journal of Differential Equations},
  volume={259},
  number={10},
  pages={5331--5352},
  year={2015},
  publisher={Elsevier}
}

@article{mascia2013metastability,
  title={Metastability for nonlinear parabolic equations with application to scalar viscous conservation laws},
  author={Mascia, Corrado and Strani, Marta},
  journal={SIAM Journal on Mathematical Analysis},
  volume={45},
  number={5},
  pages={3084--3113},
  year={2013},
  publisher={SIAM}
}

@article{matsumura1997nonlinear,
  title={Nonlinear stability of viscous shock profile for a non-convex system of viscoelasticity},
  author={Matsumura, Akitaka and Mei, Ming},
  year={1997}
}

@article{matsumura1994asymptotic,
  title={Asymptotic stability of traveling waves for scalar viscous conservation laws with non-convex nonlinearity},
  author={Matsumura, Akitaka and Nishihara, Kenji},
  journal={Communications in Mathematical Physics},
  volume={165},
  pages={83--96},
  year={1994},
  publisher={Springer}
}

@article{sattinger1976stability,
  title={On the stability of waves of nonlinear parabolic systems},
  author={Sattinger, David H},
  journal={Advances in Mathematics},
  volume={22},
  number={3},
  pages={312--355},
  year={1976},
  publisher={Elsevier}
}

@article{tenenbaum2007new,
  title={New blow-up rates for fast controls of Schr{\"o}dinger and heat equations},
  author={Tenenbaum, G{\'e}rald and Tucsnak, Marius},
  journal={Journal of Differential Equations},
  volume={243},
  number={1},
  pages={70--100},
  year={2007},
  publisher={Elsevier}
}
    \bibliographystyle{alpha}
    
\end{document}